\newtheorem{thm}{Theorem}
\newtheorem{defn}{Definition}
\newtheorem{lemma}[thm]{Lemma}
\newtheorem{prop}[thm]{Proposition}
\newcommand{\floor}[1]{\left\lfloor #1 \right\rfloor}
\DeclareMathOperator{\supp}{supp}
\DeclareMathOperator*{\argmax}{argmax}
\newenvironment{prooftitle}[1]{{\noindent \textsc{Proof #1}}\\}
\begin{document}

\title{Variable selection with error control: Another look at Stability Selection}
\author{Rajen Shah and Richard J. Samworth\\ 
  Statistical Laboratory\\
  University of Cambridge\\
  \{r.shah, r.samworth\}@statslab.cam.ac.uk
}



\def\scaleofComp_plots_9_4{0.7}
\def\scale_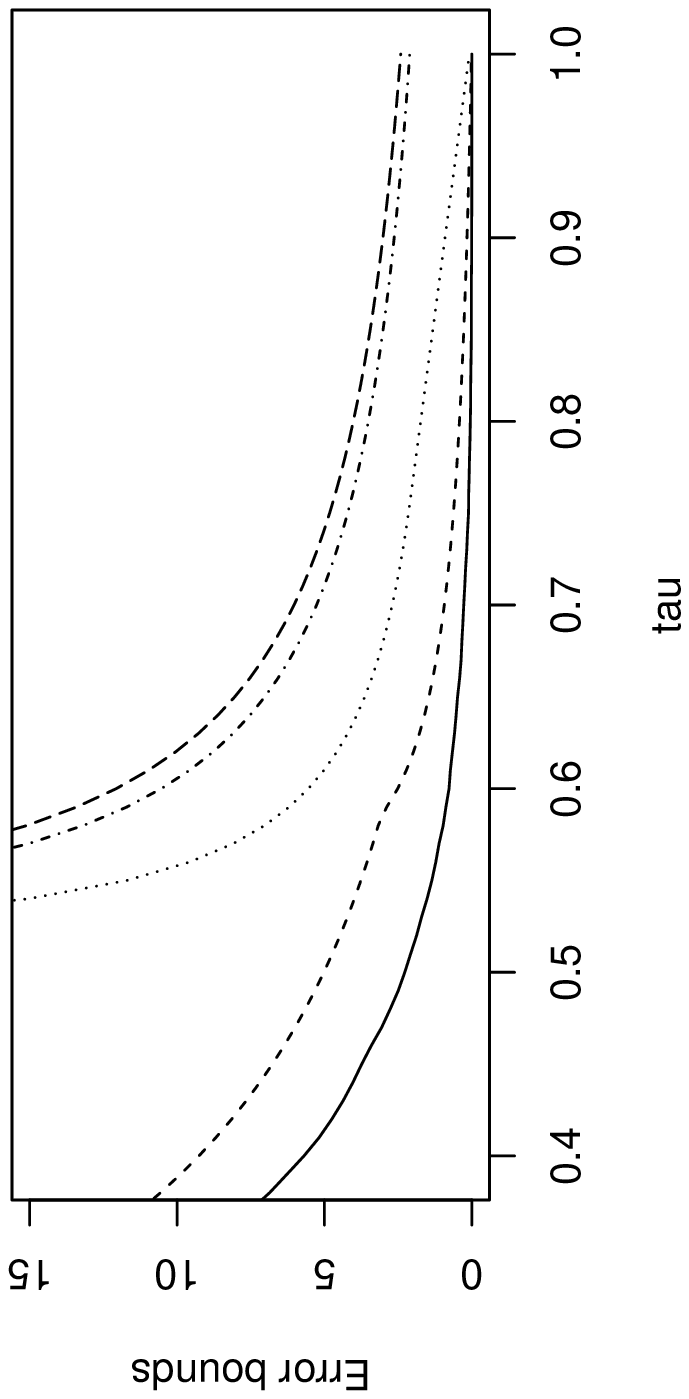{0.8}
\def\scaleof_Comp_plots_9_2_2{0.8}

\maketitle


\begin{abstract}
Stability Selection was recently introduced by \citet{MeinshausenBuhlmann2010} as a very general technique designed to improve the performance of a variable selection algorithm.  It is based on aggregating the results of applying a selection procedure to subsamples of the data.  We introduce a variant, called Complementary Pairs Stability Selection (CPSS), and derive bounds both on the expected number of variables included by CPSS that have low selection probability under the original procedure, and on the expected number of high selection probability variables that are excluded.  These results require no (e.g. exchangeability) assumptions on the underlying model or on the quality of the original selection procedure.  Under reasonable shape restrictions, the bounds can be further tightened, yielding improved error control, and therefore increasing the applicability of the methodology.    

\bigskip


\noindent{Key words: Complementary Pairs Stability Selection, $r$-concavity, subagging, subsampling, variable selection}
\end{abstract}

\section{Introduction}

The problem of variable selection has received a huge amount of attention over the last 15 years, motivated by the desire to understand structure in massive data sets that are now routinely encountered across many scientific disciplines.  It is now very common, e.g. in biological applications, image analysis and portfolio allocation problems as well as many others, for the number of variables (or predictors) $p$ that are measured to exceed the number of observations $n$.  In such circumstances, variable selection is essential for model interpretation.

In a notable recent contribution to the now vast literature on this topic, \citet{MeinshausenBuhlmann2010} proposed Stability Selection as a very general technique designed to improve the performance of a variable selection algorithm.  The basic idea is that instead of applying one's favourite algorithm to the whole data set to determine the selected set of variables, one instead applies it several times to random subsamples of the data of size $\floor{n/2}$, and chooses those variables that are selected most frequently on the subsamples.  Stability Selection is therefore intimately connected with bagging \citep{Breiman1996,Breiman1999} and subagging \citep{BuhlmannYu2002}.  

A particularly attractive feature of Stability Selection is the error control provided by an upper bound on the expected number of falsely selected variables \citep[][Theorem~1]{MeinshausenBuhlmann2010}.  Such control is typically unavailable when applying the original selection procedure to the whole data set, and allows the practitioner to select the threshold $\tau$ for the proportion of subsamples for which a variable must be selected in order for it to be declared significant.

However, the bound does have a couple of drawbacks.  Firstly, it applies to the `population version' of the subsampling process, i.e. to the version of the procedure that aggregates results over the non-random choice of all $\binom{n}{\floor{n/2}}$ subsamples.  Even for $n$ as small as 15, it is unrealistic to expect this version to be used in practice, and in fact choosing around $100$ random subsamples is probably typical.  More seriously, the bound is derived under a very strong exchangeability assumption on the selection of noise variables (as well as a weak one on the quality of the original selection procedure, namely that it is not worse than random guessing).    

In this paper, we develop the methodology and conceptual understanding of Stability Selection in several respects.  We introduce a variant of Stability Selection, where the subsamples are drawn as complementary pairs from $\{1,\ldots,n\}$.  Thus the subsampling procedure outputs index sets $\{(A_{2j-1},A_{2j}):j = 1,\ldots,B\}$, where each $A_j$ is a subset of $\{1,\ldots,n\}$ of size $\floor{n/2}$, and $A_{2j-1} \cap A_{2j} = \emptyset$.  We call this variant Complementary Pairs Stability Selection (CPSS).  

At first glance it would seem that CPSS would be expected to yield very similar results to the original version of Stability Selection.  However, we show that CPSS in fact has the following properties:
\begin{enumerate}[(i)]
\item The Meinshausen--B\"uhlmann bound holds for CPSS regardless of the number of complementary pairs $B$ chosen -- even with $B = 1$.
\item There is a corresponding bound for the number of important variables excluded by CPSS.
\item Our results have no conditions on the original selection procedure, and in particular do not require the strong exchangeability assumption on the selection of noise variables.  Indeed, we argue that even a precise definition of `signal' and `noise' variables is not helpful in trying to understand the properties of CPSS, and we instead state the bounds in terms of the expected number of variables chosen by CPSS that have low selection probability under the base selection procedure, and the expected number of high selection probability variables that are excluded by CPSS.  See Section~\ref{Sec:CPSS} for further discussion.
\item The bound on the number of low selection probability variables chosen by CPSS can be significantly sharpened under mild shape restrictions (e.g. unimodality or $r$-concavity) on the distribution of the proportion of times a variable is selected in both $A_{2j-1}$ and $A_{2j}$.  We discuss these conditions in detail in Sections~\ref{sec:ImprovedTail} and~\ref{sec:r_concave} respectively, and compare both the original and new bounds to demonstrate the marked improvement.
\end{enumerate}
Our improved bounds are based on new versions of Markov's inequality that hold for random variables whose distributions are unimodal or $r$-concave.  However, it is important to note at this point that the results are not just a theoretical contribution; they allow the practitioner to reduce $\tau$ (and therefore select more variables) for the same control of the number of low selection probability variables chosen by CPSS.  In Section~\ref{Sec:Practice}, we give recommendations on how a practitioner can make use of the bounds in applying CPSS. 


In Section~\ref{Sec:SimStudy}, we present the results of an extensive simulation study designed to illustrate the appropriateness of our shape restrictions, and to compare Stability Selection and CPSS with their base selection procedures. 

A review of some of the extensive literature on variable selection can be found in~\citet{FanLv2010}.  Work related more specifically to Stability Selection includes \citet{Bach2008}, who studied the Bolasso (short for Bootstrapped enhanced Lasso).  This involves applying the Lasso to bootstrap (with replacement) samples from the original data, rather than subsampling without replacement.  A final estimate is obtained by applying the Lasso to the intersection of the set of variables selected across the bootstrap samples.  Various authors, particularly in the machine learning literature, have considered the \emph{stability} of a feature selection algorithm, i.e. the insensitivity of the output of the algorithm to variations in the training set; such studies include \citet{LBRB2003}, \citet{KPH2007}, \citet{Kuncheva2007}, \citet{LYD2009} and \citet{HanYu2010}.  \citet{SAP2008} consider obtaining a final feature ranking by aggregating the rankings across bootstrap samples.

\section{Complementary Pairs Stability Selection}
\label{Sec:CPSS}

In order to keep our discussion rather general, we only assume that we have vector-valued data $z_1,\ldots,z_n$ which we take to be a realisation of independent and identically distributed random elements $Z_1,\ldots,Z_n$.  Informally, we think of some of the components of $Z_i$ as being `signal variables', and others as being `noise variables', though for our purposes it is not necessary to define these notions precisely.  Formally, we let $S \subseteq \{1,\ldots,p\}$ and $N := \{1,\ldots,p\} \setminus S$, thought of as the index sets of the signal and noise variables respectively.  A \emph{variable selection procedure} is a statistic $\hat{S}_n := \hat{S}_n(Z_1,\ldots,Z_n)$ taking values in the set of all subsets of $\{1,\ldots,p\}$, and we think of $\hat{S}_n$ as an estimator of $S$.  As a typical example, we may often write $Z_i = (X_i, Y_i)$ with the covariate $X_i \in \mathbb{R}^p$ and the response $Y_i \in \mathbb{R}$, and our (pseudo) log-likelihood might be of the form
\begin{equation}
\label{Eq:LogLike}
\sum_{i=1}^n L(Y_i,X_i^T\beta),
\end{equation}
for some $\beta \in \mathbb{R}^p$.  In this context, we regard $S:= \{k:\beta_k \neq 0\}$ as the signal indices, $N = \{k:\beta_k=0\}$ as noise indices.  Examples from graphical modelling can also be cast within our framework.  Note however that we do not require a (pseudo) log-likelihood of the form~(\ref{Eq:LogLike}). 

We define the selection probability of a variable index $k \in \{1,\ldots,p\}$ under $\hat{S}_n$ as
\begin{equation}
p_{k,n} = \mathbb{P}(k \in \hat{S}_n) = \mathbb{E}(\mathbbm{1}_{\{k \in \hat{S}_n\}}).
\end{equation}
We take the view that for understanding the properties of Stability Selection, the selection probabilities $p_{k,n}$ are the fundamental quantities of interest.  Since an application of Stability Selection is contingent on a choice of base selection procedure $\hat{S}_n$, all we can hope is that it selects variables having high selection probability under the base procedure, and avoids selecting those variables with low selection probability.  Indeed this turns out to be the case; see Theorem~\ref{thm:hi_low} below.

Of course, $\mathbbm{1}_{\{k \in \hat{S}_n\}}$ has a Bernoulli distribution with parameter $p_{k,n}$, so we may view $\mathbbm{1}_{\{k \in \hat{S}_n\}}$ as an unbiased estimator of $p_{k,n}$ (though $p_{k,n}$ is not a model parameter in the conventional sense).  The key idea of Stability Selection is to improve on this simple estimator of $p_{k,n}$ through subsampling.


For a subset $A = \{i_1, \ldots, i_{|A|}\} \subset \{1,\ldots,n\}$ with $i_i < \cdots < i_{|A|}$, we shall write
\[
 \hat{S}(A) := \hat{S}_{|A|}(Z_{i_1}, \ldots, Z_{i_{|A|}}).
\]
\begin{defn}[Complementary Pairs Stability Selection]
Let $\{(A_{2j-1},A_{2j}):j = 1,\ldots,B\}$ be randomly chosen independent pairs of subsets of $\{1,\ldots,n\}$ of size $\floor{n/2}$ such that $A_{2j-1} \cap A_{2j} = \emptyset$.  For $\tau \in [0,1]$, the Complementary Pairs Stability Selection version of a variable selection procedure $\hat{S}_n$ is $\hat{S}_{n,\tau}^{\mathrm{CPSS}} = \{k:\hat{\Pi}_B(k) \geq \tau\}$, where the function $\hat{\Pi}_B : \{1,\ldots,p\} \to \{0, \tfrac{1}{2B},\tfrac{1}{B}, \ldots, 1\}$ is given by
\begin{equation} 
\label{eq:StabSelect}
\hat{\Pi}_B(k) := \frac{1}{2B} \sum_{j=1}^{2B} \mathbbm{1}_{\{k \in \hat{S}(A_j)\}}.
\end{equation}
\end{defn}
Note that $\hat{\Pi}_B(k)$ is an unbiased estimator of $p_{k,\floor{n/2}}$, but, in general, a biased estimator of $p_{k,n}$. However, by means of the averaging involved in \eqref{eq:StabSelect}, we hope that $\hat{\Pi}_B(k)$ will have reduced variance compared with $\mathbbm{1}_{\{k \in \hat{S}_n\}}$, and that this increased stability will more than compensate for the bias incurred.  Indeed, this is the case in other situations where bagging and subagging have been successfully applied, such as classification trees \citep{Breiman1996} or nearest neighbour classifiers \citep{HallSamworth2005,BCG2010,Samworth2011}.  


An alternative to subsampling complementary pairs would be to use bootstrap sampling.  We have found that this gives very similar estimates of $p_{k,n}$, though most of our theoretical arguments do not apply when the bootstrap is used (the approach in Section~\ref{Sec:lowertau} is an exception in this regard).  In fact, taking subsamples of size $\floor{n/2}$ can be thought of as the subsampling scheme that most closely mimics the bootstrap \citep[e.g.][]{DSS2011}.

It is convenient at this stage to define another related selection procedure based on sample splitting.
\begin{defn}[Simultaneous Selection]
Let $\{(A_{2j-1},A_{2j}):j = 1,\ldots,B\}$ be randomly chosen independent pairs of subsets of $\{1,\ldots,n\}$ of size $\floor{n/2}$ such that $A_{2j-1} \cap A_{2j} = \emptyset$.  For $\tau \in [0,1]$, the Simultaneous Selection version of $\hat{S}_n$ is $\hat{S}_{n,\tau}^{\mathrm{SIM}} = \{k:\tilde{\Pi}_B(k) \geq \tau\}$, where 
\begin{equation}
\label{Eq:simult}
\tilde{\Pi}_B(k) := \frac{1}{B} \sum_{j=1}^B \mathbbm{1}_{\{k \in \hat{S}(A_{2j-1})\}}\mathbbm{1}_{\{k \in \hat{S}(A_{2j})\}}.
\end{equation}
\end{defn}
For our purposes, Simultaneous Selection is a tool for understanding the properties of CPSS.  However, the special case of $B=1$ of Simultaneous Selection was studied by \citet{FSW2009}, and a variant involving all possible disjoint pairs of subsets was considered in \citet{MeinshausenBuhlmann2010}.

\section{Theoretical properties}

\subsection{Worst-case bounds}
\label{Sec:WorstCase}

In Theorem~\ref{thm:hi_low} below, we show that the expected number of low selection probability variables chosen by CPSS is controlled in terms of the expected number chosen by the original selection procedure, with a corresponding result for the expected number of high selection probability variables not chosen by CPSS.  The appealing feature of these results is their generality: they require no assumptions on the underlying model or on the quality of the original selection procedure, and they apply regardless of the number $B$ of complementary pairs of subsets chosen.  

For $\theta \in [0,1]$, let $L_\theta = \{k: p_{k,\floor{n/2}} \leq \theta\}$ denote the set of variable indices that have low selection probability under $\hat{S}_{\floor{n/2}}$, and let $H_{\theta} = \{k:p_{k,\floor{n/2}} > \theta\}$ denote the set of those that have high selection probability.
\begin{thm}
\label{thm:hi_low}
\begin{enumerate}[(i)]
\item If $\tau \in (\frac{1}{2},1]$, then
\[
\mathbb{E}|\hat{S}_{n,\tau}^{\mathrm{CPSS}} \cap L_\theta| \leq \frac{\theta}{2\tau-1} \mathbb{E}|\hat{S}_{\floor{n/2}} \cap L_\theta|.
\]
\item Let $\hat{N}_{n,\tau}^{\mathrm{CPSS}} = \{1,\ldots,p\} \setminus \hat{S}_{n,\tau}^{\mathrm{CPSS}}$ and $\hat{N}_n = \{1,\ldots,p\} \setminus \hat{S}_n$.  If $\tau \in [0,\frac{1}{2})$, then
\[
\mathbb{E}|\hat{N}_{n,\tau}^{\mathrm{CPSS}} \cap H_\theta| \leq \frac{1-\theta}{1-2\tau} \mathbb{E}|\hat{N}_{\floor{n/2}} \cap H_\theta|.
\]
\end{enumerate}
\end{thm}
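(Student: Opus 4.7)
The plan is to reduce both statements to per-index probability bounds via linearity of expectation, and then exploit two key facts about a complementary pair $(A_{2j-1},A_{2j})$: the two halves are disjoint subsets of the i.i.d.\ sample, so $\mathbbm{1}_{\{k \in \hat{S}(A_{2j-1})\}}$ and $\mathbbm{1}_{\{k \in \hat{S}(A_{2j})\}}$ are independent, each Bernoulli$(p_{k,\floor{n/2}})$. Taking expectations in~(\ref{eq:StabSelect}) and in~(\ref{Eq:simult}) then gives $\mathbb{E}\hat{\Pi}_B(k) = p_{k,\floor{n/2}}$ and $\mathbb{E}\tilde{\Pi}_B(k) = p_{k,\floor{n/2}}^2$.

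For part (i), I would write, for each fixed $k$, $\hat{\Pi}_B(k) = (Y_k + Z_k)/2$, where
\[
Y_k := \frac{1}{B}\sum_{j=1}^B \mathbbm{1}_{\{k \in \hat{S}(A_{2j-1})\cup\hat{S}(A_{2j})\}}, \qquad Z_k := \tilde{\Pi}_B(k) = \frac{1}{B}\sum_{j=1}^B \mathbbm{1}_{\{k \in \hat{S}(A_{2j-1})\cap\hat{S}(A_{2j})\}},
\]
obtained from the decomposition $\mathbbm{1}_{\{k\in\hat{S}(A_{2j-1})\}} + \mathbbm{1}_{\{k\in\hat{S}(A_{2j})\}} = \mathbbm{1}_{\{k\in\hat{S}(A_{2j-1})\cup \hat{S}(A_{2j})\}} + \mathbbm{1}_{\{k\in\hat{S}(A_{2j-1})\cap\hat{S}(A_{2j})\}}$. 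Since $Y_k\le 1$, the event $\{\hat{\Pi}_B(k)\ge\tau\}$ is contained in $\{\tilde{\Pi}_B(k)\ge 2\tau-1\}$. When $\tau>1/2$, Markov's inequality combined with the independence observation above yields
\[
\mathbb{P}(\hat{\Pi}_B(k)\ge\tau) \;\le\; \frac{\mathbb{E}\tilde{\Pi}_B(k)}{2\tau-1} \;=\; \frac{p_{k,\floor{n/2}}^2}{2\tau-1}.
\]
For $k\in L_\theta$ we have $p_{k,\floor{n/2}}\le\theta$, so the right-hand side is at most $\theta\, p_{k,\floor{n/2}}/(2\tau-1)$. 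Summing over $k\in L_\theta$ and using $\mathbb{E}|\hat{S}_{\floor{n/2}}\cap L_\theta| = \sum_{k\in L_\theta} p_{k,\floor{n/2}}$ delivers (i).

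For part (ii), I would apply exactly the same argument to the complementary indicators $\mathbbm{1}_{\{k\notin\hat{S}(A_j)\}}$. Writing $\tilde{Q}_B(k) := \frac{1}{B}\sum_{j=1}^B \mathbbm{1}_{\{k\notin\hat{S}(A_{2j-1})\}}\mathbbm{1}_{\{k\notin\hat{S}(A_{2j})\}}$, the same two-term decomposition gives $1-\hat{\Pi}_B(k) = (Y'_k + \tilde{Q}_B(k))/2$ with $Y'_k\le 1$, so $\{\hat{\Pi}_B(k)<\tau\}\subseteq\{\tilde{Q}_B(k)>1-2\tau\}$. For $\tau<1/2$, independence again gives $\mathbb{E}\tilde{Q}_B(k) = (1-p_{k,\floor{n/2}})^2$, so Markov yields
\[
\mathbb{P}(k\notin \hat{S}^{\mathrm{CPSS}}_{n,\tau}) \;\le\; \frac{(1-p_{k,\floor{n/2}})^2}{1-2\tau}.
\]
For $k\in H_\theta$, $1-p_{k,\floor{n/2}}<1-\theta$, and summing over $k\in H_\theta$ gives the claimed bound since $\mathbb{E}|\hat{N}_{\floor{n/2}}\cap H_\theta| = \sum_{k\in H_\theta}(1-p_{k,\floor{n/2}})$.

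The only genuinely non-routine step is spotting the decomposition $\mathbbm{1}_A+\mathbbm{1}_B = \mathbbm{1}_{A\cup B}+\mathbbm{1}_{A\cap B}$ that lets us bound $\hat{\Pi}_B(k)$ from above in terms of the simultaneous-selection proportion $\tilde{\Pi}_B(k)$. Once this identity is in place, the disjointness of each complementary pair makes $\mathbb{E}\tilde{\Pi}_B(k)$ a \emph{square} of a Bernoulli probability, which is what produces the extra factor $\theta$ (respectively $1-\theta$) needed for the stated bound — the rest is linearity of expectation and a single application of Markov's inequality, and in particular the argument works for every $B\ge 1$.
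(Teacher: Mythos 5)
Your proposal is correct and follows essentially the same route as the paper: the identity $\mathbbm{1}_A+\mathbbm{1}_B=\mathbbm{1}_{A\cup B}+\mathbbm{1}_{A\cap B}$ is just a rephrasing of the paper's expansion of $\frac{1}{B}\sum_j(1-\mathbbm{1}_{\{k\in\hat{S}(A_{2j-1})\}})(1-\mathbbm{1}_{\{k\in\hat{S}(A_{2j})\}})\ge 0$, yielding the same key inequality $2\hat{\Pi}_B(k)\le 1+\tilde{\Pi}_B(k)$, after which Markov's inequality with $\mathbb{E}\tilde{\Pi}_B(k)=p_{k,\floor{n/2}}^2$ and the bound $p_{k,\floor{n/2}}^2\le\theta\,p_{k,\floor{n/2}}$ on $L_\theta$ (and its mirror image for part (ii)) give the result exactly as in the paper's Lemma~\ref{lemma:selprob}. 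The only cosmetic difference is that the paper computes $\mathbb{E}\tilde{\Pi}_B(k)$ by first conditioning on the random choice of subsets $\mathcal{A}$, whereas you assert the independence of the two indicators directly; both are valid.
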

In many applications, and for a good base selection procedure, we imagine that the set of selection probabilities $\{p_{k,\floor{n/2}}:k=1,\ldots,p\}$ is positively skewed in $[0,1]$, with many selection probabilities being very low (predominantly noise variables), and with just a few being large (including at least some of the signal variables).
To illustrate Theorem~\ref{thm:hi_low}(i), consider a situation with $p=1000$ variables and where the base selection procedure chooses 50 of them.  Then Theorem~\ref{thm:hi_low}(i) shows that on average CPSS with $\tau = 0.6$ selects no more than a quarter of the below average selection probability variables chosen by $\hat{S}_{\floor{n/2}}$.

Our Theorem~\ref{thm:hi_low}(i) is analogous to Theorem~1 of \citet{MeinshausenBuhlmann2010}.  The differences are that we do not require the condition that $\{\mathbbm{1}_{\{k \in \hat{S}_{\floor{n/2}}\}}:k \in N\}$ is exchangeable, nor that the original procedure is no worse than random guessing, and our result holds for all $B$.  The price we pay is that the bound is stated in terms of the expected number of low selection probability variables chosen by CPSS, rather than the expected number of noise variables, which we do for the reasons described in Section~\ref{Sec:CPSS}.  If the exchangeability and random guessing conditions mentioned above do hold, then, writing $q := \mathbb{E}|\hat{S}_{\floor{n/2}}|$, we recover
\[
\mathbb{E}|\hat{S}_{n,\tau}^{\mathrm{CPSS}} \cap N| \leq \frac{1}{2\tau-1} \Bigl(\frac{q}{p}\Bigr)\mathbb{E}|\hat{S}_{\floor{n/2}} \cap L_{q/p}| \leq \frac{1}{2\tau-1} \Bigl(\frac{q^2}{p}\Bigr).
\]
The final bound here was obtained in Theorem~1 of \citet{MeinshausenBuhlmann2010} for the population version of Stability Selection.

\subsection{Improved bounds under unimodality} \label{sec:ImprovedTail}

Despite the attractions of Theorem~\ref{thm:hi_low}, the following observations suggest there may be scope for improvement.  Firstly, we expect we should be able to obtain tighter bounds as $B$ increases.  Secondly, and more importantly, examination of the proof of Theorem~\ref{thm:hi_low}(i) shows that our bound relies on first noting that
\begin{equation} \label{eq:simult_stab}
 1 + \tilde{\Pi}_B(k) \geq 2 \hat{\Pi}_B(k),
\end{equation}
and then applying Markov's inequality to $\tilde{\Pi}_B(k)$.
For equality in Markov's inequality, $\tilde{\Pi}_B(k)$ must be a mixture of point masses at $0$ and $2\tau-1$, but Figure~\ref{fig:compare_dist} suggests that the distribution of $\tilde{\Pi}_B(k)$, which is supported on $\{0,\frac{1}{B},\frac{2}{B},\ldots,1\}$, can be very different from this.  Indeed, our experience, based on extensive simulation studies, is that when $\theta$ is close to $q/p$ (which is where the bound in Theorem~\ref{thm:hi_low}(i) is probably of most interest), the distribution of $\tilde{\Pi}_B(k)$ over $k \in L_\theta$ is remarkably consistent over different data generating processes, and Figure~\ref{fig:compare_dist} is typical.
\begin{figure}[p]
	\begin{center}
		\psfragscanon
		\psfrag{0.0}[][][\scaleofComp_plots_9_4]{0.0}
		\psfrag{0.2}[][][\scaleofComp_plots_9_4]{0.2}
		\psfrag{0.4}[][][\scaleofComp_plots_9_4]{0.4}
		\psfrag{0.6}[][][\scaleofComp_plots_9_4]{0.6}
		\psfrag{0.8}[][][\scaleofComp_plots_9_4]{0.8}
		\psfrag{1.0}[][][\scaleofComp_plots_9_4]{1.0}
		\psfrag{0}[][][\scaleofComp_plots_9_4]{0}
		\psfrag{2}[][][\scaleofComp_plots_9_4]{2}
		\psfrag{4}[][][\scaleofComp_plots_9_4]{4}
		\psfrag{6}[][][\scaleofComp_plots_9_4]{6}
		\psfrag{8}[][][\scaleofComp_plots_9_4]{8}
		\psfrag{10}[][][\scaleofComp_plots_9_4]{10}
		\psfrag{1.2}[][][\scaleofComp_plots_9_4]{1.2}
		\psfrag{0}[][][\scaleofComp_plots_9_4]{0}
		\psfrag{50}[][][\scaleofComp_plots_9_4]{50}
		\psfrag{100}[][][\scaleofComp_plots_9_4]{100}
		\psfrag{150}[][][\scaleofComp_plots_9_4]{150}
		\psfrag{200}[][][\scaleofComp_plots_9_4]{200}
		\psfrag{250}[][][\scaleofComp_plots_9_4]{250}
		\psfrag{Probability}[][][0.8]{Probability}
		\psfrag{Probability*10^2}[][][0.8]{$\mathrm{Probability} \times 10^2$}
		\psfrag{Probability^(-1/2)}[][][0.8]{$\mathrm{Probability}^{-1/2}$}
		\psfrag{a}[][][0.7]{Worst case}
		\psfrag{b}[][][0.7]{Unimodal}
		\psfrag{c}[][][0.7]{$-\frac{1}{2}$-concave}
		\psfrag{d}[][][0.7]{Empirical}
		\includegraphics[width=\textwidth]{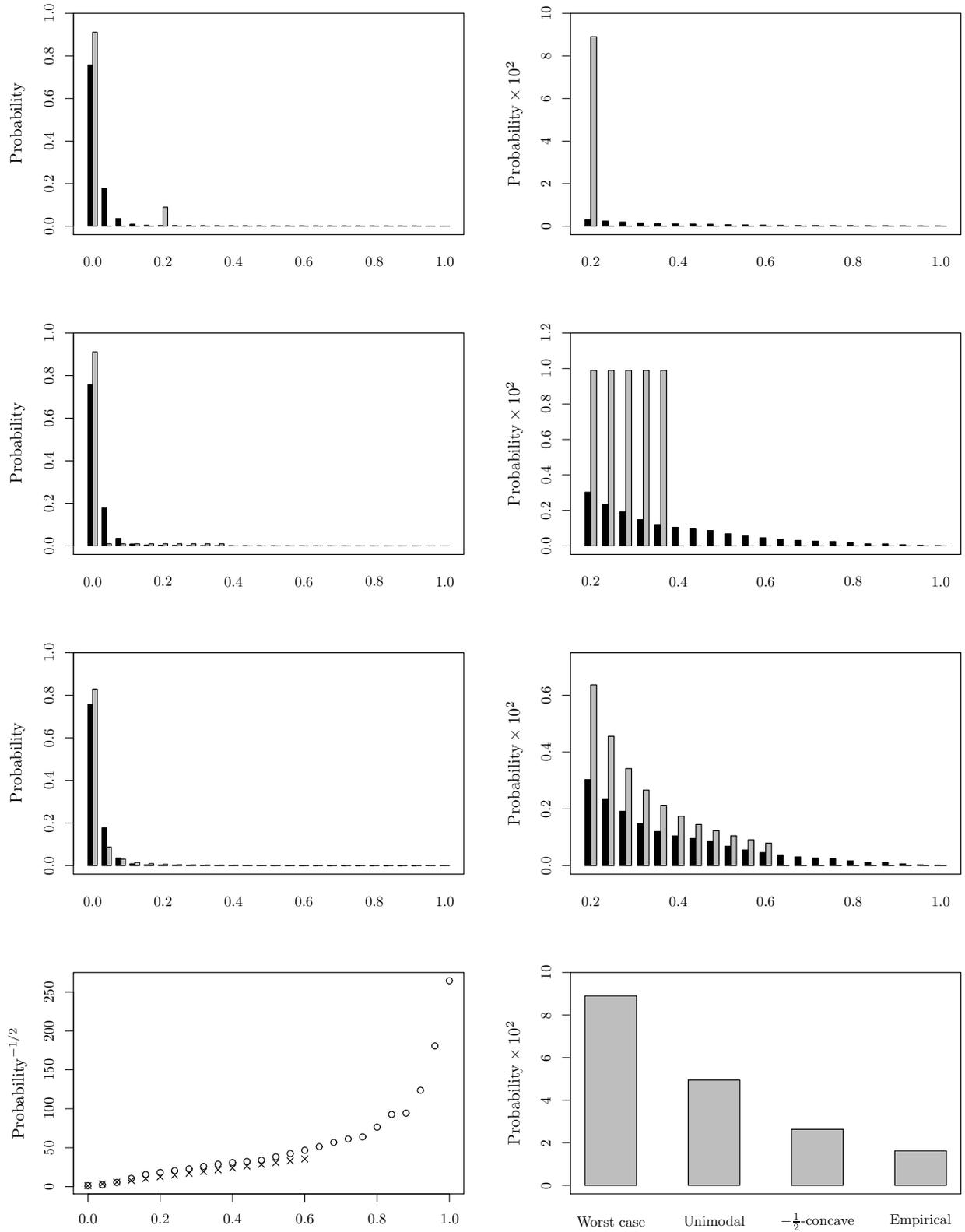}
    \caption{Rows 1 to 3 show a typical example of the full probability mass function (left) and zoomed in from 0.2 onwards (right) of $\tilde{\Pi}_{25}(k)$ for $k \in L_{q/p}$ (black), alongside the unrestricted, unimodal and $-1/2$-concave distributions respectively (grey), which have maximum tail probability beyond 0.2.  This situation corresponds to selecting $\tau = 0.6$.  Bottom left: the observed mass function (circles) and the extremal $-1/2$-concave mass function (crosses) on the $x^{-1/2}$ scale.  Bottom right: tail probabilities from 0.2 onwards for each of the distributions.}   \label{fig:compare_dist}
	\end{center}
\end{figure}
It is therefore natural to consider placing shape restrictions on the distribution of $\tilde{\Pi}_B(k)$ which encompass what we see in practice, and which yield stronger versions of Markov's inequality.  As a first step in this direction, we consider the assumption of unimodality.
\begin{thm}
\label{thm:Unimodal}
Suppose that the distribution of $\tilde{\Pi}_B(k)$ is unimodal for each $k \in L_\theta$.  If $\tau \in \{\frac{1}{2} + \frac{1}{B},\frac{1}{2}+\frac{3}{2B},\frac{1}{2}+\frac{2}{B},\ldots,1\}$, then
\[
\mathbb{E}|\hat{S}_{n,\tau}^{\mathrm{CPSS}} \cap L_\theta| \leq C(\tau,B)\, \theta \, \mathbb{E}|\hat{S}_{\floor{n/2}} \cap L_\theta|,
\]
where, when $\theta \leq 1/\sqrt{3}$,
\[
C(\tau,B) = \left\{ \begin{array}{ll} \vspace*{0.1in}
\dfrac{1}{2(2\tau - 1 - 1/2B)} & \mbox{if $\tau \in (\min(\frac{1}{2} + \theta^2,\frac{1}{2}+\frac{1}{2B} + \frac{3}{4}\theta^2),\frac{3}{4}]$} \\ 
\dfrac{4(1- \tau + 1/2B)}{1 + 1/B} & \mbox{if $\tau \in (\frac{3}{4},1]$}. \end{array} \right.
\]
\end{thm}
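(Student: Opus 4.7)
The starting move is identical to the proof of Theorem~\ref{thm:hi_low}(i). The pointwise inequality $\mathbbm{1}_A + \mathbbm{1}_B \leq 1 + \mathbbm{1}_A\mathbbm{1}_B$, summed over the $B$ complementary pairs, gives $2\hat{\Pi}_B(k) \leq 1 + \tilde{\Pi}_B(k)$, so $\{\hat{\Pi}_B(k) \geq \tau\} \subseteq \{\tilde{\Pi}_B(k) \geq 2\tau-1\}$. Under the hypothesis on $\tau$, the level $2\tau-1$ lies on the lattice $\{2/B,3/B,\ldots,1\}$ on which $\tilde{\Pi}_B(k)$ is supported. Because $A_{2j-1}$ and $A_{2j}$ involve disjoint, hence independent, observations, $\mathbb{E}\tilde{\Pi}_B(k) = p_{k,\floor{n/2}}^2 =: \nu$. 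So the task reduces to bounding $\mathbb{P}(X \geq 2\tau-1)$ where $X = \tilde{\Pi}_B(k)$ is unimodal on $\{0,1/B,\ldots,1\}$ with mean $\nu$.

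The proof of Theorem~\ref{thm:hi_low}(i) closes by applying Markov's inequality, losing a factor of roughly $1/(2\tau-1)$ relative to $\nu$. The improvement here will come from replacing that crude step by a sharpened first-moment inequality tailored to unimodal lattice distributions, namely $\mathbb{P}(X\geq a)\leq C(\tau,B)\,\nu$ with $C(\tau,B)$ roughly half of the Markov estimate $1/(2\tau-1)$. The natural tool is a discrete analogue of Khintchine's theorem: any unimodal PMF on $\{0,1/B,\ldots,1\}$ with mode at $m/B$ can be written as a convex combination of PMFs uniform on sub-lattices $\{j/B,(j+1)/B,\ldots,k/B\}$ with $j\leq m\leq k$, via the explicit weights $\lambda_{j,k}$ recovered from successive differences $p_i - p_{i+1}$. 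For each fixed mode $m$, both $\mathbb{P}(X\geq a)$ and $\mathbb{E}X$ are linear functionals of the mixing measure $(\lambda_{j,k})$, so by a dimension-counting LP argument the supremum of $\mathbb{P}(X\geq a)$ under the mean constraint is attained by a mixture of at most two such uniforms; one then takes a further supremum over $m$.

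Enumerating the candidate extrema is a finite case analysis. For $U_{j,k}$ uniform on $\{j/B,\ldots,k/B\}$, $\mathbb{E}U_{j,k}=(j+k)/(2B)$ and, for $aB\leq k$, $\mathbb{P}(U_{j,k}\geq a)=(k-\lceil aB\rceil+1)/(k-j+1)$, so the tail-to-mean ratios are explicit rationals. The optimum switches between two regimes depending on whether the extremal distribution (i) is concentrated just above $a$ and anchored by a point mass at $0$ to cap the mean, producing $C(\tau,B)=\frac{1}{2(2\tau-1-1/(2B))}$, or (ii) spreads over the full sub-lattice stretching to the right endpoint $1$, producing $C(\tau,B)=\frac{4(1-\tau+1/(2B))}{1+1/B}$. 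The boundary $\tau = 3/4$ marks the transition between these two regimes, while the hypothesis $\theta\leq 1/\sqrt{3}$ together with the lower threshold $\tau>\min(\tfrac{1}{2}+\theta^2,\tfrac{1}{2}+\tfrac{1}{2B}+\tfrac{3}{4}\theta^2)$ ensures that the mean constraint $\nu\leq\theta^2$ is small enough for these two configurations to dominate all other extremal candidates.

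Granting the pointwise bound $\mathbb{P}(k\in\hat{S}_{n,\tau}^{\mathrm{CPSS}})\leq C(\tau,B)\,p_{k,\floor{n/2}}^2$, the theorem follows by summing over $k\in L_\theta$ and applying $p_{k,\floor{n/2}}\leq\theta$ to one factor: $\mathbb{E}|\hat{S}_{n,\tau}^{\mathrm{CPSS}}\cap L_\theta|\leq C(\tau,B)\sum_{k\in L_\theta} p_{k,\floor{n/2}}^2\leq C(\tau,B)\,\theta\,\mathbb{E}|\hat{S}_{\floor{n/2}}\cap L_\theta|$. I expect the main obstacle to be the extremal analysis in paragraph three: verifying that the precise thresholds on $\tau$ and the condition $\theta\leq 1/\sqrt{3}$ are exactly what is needed for the two tabulated configurations to beat every competing unimodal distribution --- in particular, configurations with mode strictly below $a$, and mixtures involving uniforms whose supports fail to straddle $a$, must be ruled out by direct comparison.
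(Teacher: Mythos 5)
Your proposal follows essentially the same route as the paper: the reduction via $2\hat{\Pi}_B(k) \leq 1 + \tilde{\Pi}_B(k)$ and $\mathbb{E}\tilde{\Pi}_B(k) = p_{k,\lfloor n/2\rfloor}^2$ is exactly the paper's, and the heart of the argument is, as you say, a sharpened Markov inequality for unimodal distributions on the lattice $\{0,1/B,\ldots,1\}$ (Theorem~\ref{thm:UniMarkov}), obtained by identifying the extremal distributions and solving a small family of linear programs. The one methodological difference is how you reach the extremal structure: you invoke the discrete Khintchine representation (mixture of uniforms on sub-intervals containing the mode) plus a vertex-counting argument with the two active linear constraints, whereas the paper derives the same two-parameter extremal forms directly by perturbation (Proposition~\ref{prop:ubound}, using Proposition~\ref{prop:ineq}); both yield precisely the configurations you describe --- a point mass at $0$ plus a uniform block (the paper's problem $(P)$), or a uniform block capped by a larger atom at the threshold itself (problem $(Q)$) --- so your route is sound and essentially equivalent. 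The part you defer, the finite case comparison, is exactly where the paper's effort goes and it resolves as you anticipate: with $t=2\tau-1$ and $\eta\leq\theta^2\leq 1/3$, the candidate bounds are $b_1=\frac{2\eta-t+1/B}{t+1/B}$ (from $(Q)$), $b_2=\frac{\eta}{2t-1/B}$ and $b_3=\frac{2\eta(1-t+1/B)}{1+1/B}$ (from $(P)$, where the optimal block length is $s^*\in\{2Bt-2,2Bt-1\}$ when $t\leq 1/2$ and $s^*=B$ when $t>1/2$); the lower threshold on $\tau$ is what makes $b_1$ dominated (indeed $b_1\leq 0$ once $t>2\eta$), and $\tau=3/4$, i.e.\ $t=1/2$, is the $b_2$/$b_3$ crossover. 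Only your description of the $b_2$-extremal as ``concentrated just above $a$'' is slightly off --- its uniform block extends to roughly $2t$, straddling the threshold --- but this does not affect the argument.
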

The proof of Theorem~\ref{thm:Unimodal} is based on a new version of Markov's inequality (Theorem~\ref{thm:UniMarkov} in the Appendix) for random variables with unimodal distributions supported on a finite lattice.  There is also an explicit expression for $C(\tau,B)$ when $\theta > 1/\sqrt{3}$, which follows from Theorem~\ref{thm:UniMarkov} in the same way, but we do not present it here because it is a little more complicated, and because we anticipate the bound when $\theta$ is (much) smaller than $1/\sqrt{3}$ being of most use in practice.  See Section~\ref{Sec:Practice} for further discussion.

Figure~\ref{fig:bounds_compare} compares the bounds provided by Theorems~\ref{thm:hi_low} and Theorem~\ref{thm:Unimodal} as a function of $\tau$, for the illustration discussed after the statement of Theorem~\ref{thm:hi_low}.  
\begin{figure}[htb]
	\begin{center}
		\psfragscanon
		\psfrag{0.4}[][][\scale_bounds_compare.ps]{0.4}
		\psfrag{0.5}[][][\scale_bounds_compare.ps]{0.5}
		\psfrag{0.6}[][][\scale_bounds_compare.ps]{0.6}
		\psfrag{0.7}[][][\scale_bounds_compare.ps]{0.7}
		\psfrag{0.8}[][][\scale_bounds_compare.ps]{0.8}
		\psfrag{0.9}[][][\scale_bounds_compare.ps]{0.9}
		\psfrag{1.0}[][][\scale_bounds_compare.ps]{1.0}
		\psfrag{0}[][][\scale_bounds_compare.ps]{0}
		\psfrag{5}[][][\scale_bounds_compare.ps]{5}
		\psfrag{10}[][][\scale_bounds_compare.ps]{10}
		\psfrag{15}[][][\scale_bounds_compare.ps]{15}
  		\psfrag{Error bounds}[][][0.9]{Bound on $\mathbb{E}|\hat{S}_{n,\tau}^{\mathrm{CPSS}} \cap L_{q/p}|$}
		\psfrag{tau}[][][0.9]{$\tau$}
		\includegraphics[height = \textwidth, angle = 270, clip = true, trim = 55 0 18 10]{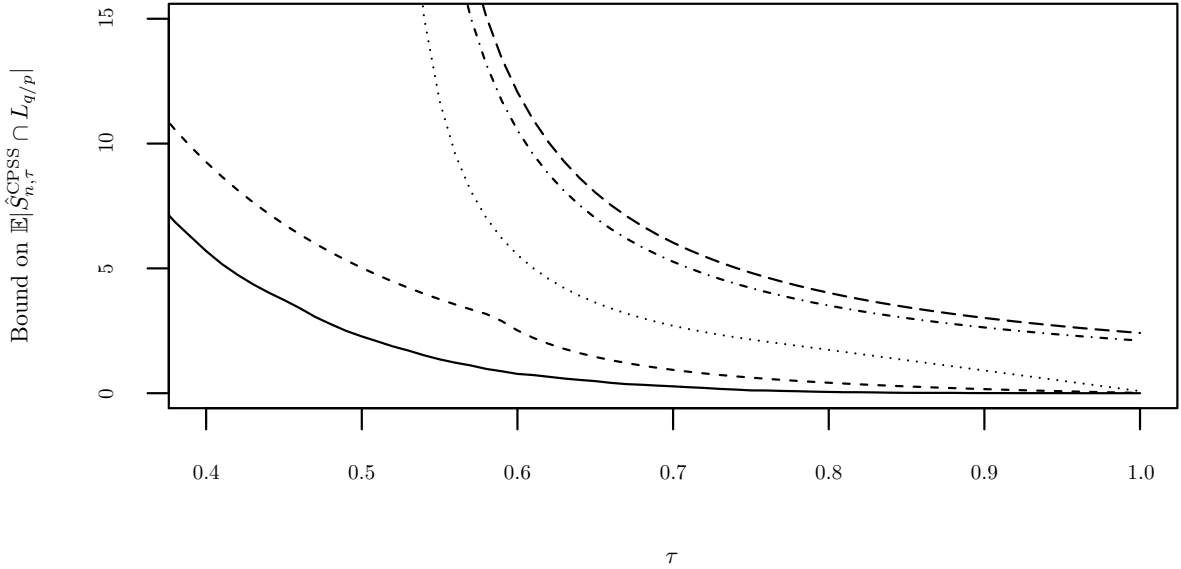}
  \caption{Comparison of the bounds on $\mathbb{E}|\hat{S}_{n,\tau}^{\mathrm{CPSS}} \cap L_{q/p}|$ for different values of the threshold $\tau$: the original bound from Theorem 1 of \citet{MeinshausenBuhlmann2010} (long dashes), our worst case bound (dots and dashes), the unimodal bound (dots) and the $r$-concave bound \eqref{eq:FinalBound} (short dashes). The solid line is the true value of $\mathbb{E}|\hat{S}_{n,\tau}^{\mathrm{CPSS}} \cap L_{q/p}|$ for a simulated example. In this case $p=1000$, $q=50$ and the number of signal variables was 8.
}
  \label{fig:bounds_compare}
	\end{center}
\end{figure}

\subsection{Further improvements under $r$-concavity} 
\label{sec:r_concave}

The unimodal assumption allows for a significant improvement in the bounds attainable from a naive application of Markov's inequality.  However, Figure~\ref{fig:compare_dist} suggests that further gains may be realised by placing tighter constraints on the family of distributions for $\tilde{\Pi}_B(k)$ that we consider, in order to match better the empirical distributions that we see in practice. 

A very natural constraint to impose on the distribution of $\tilde{\Pi}_B(k)$ is log-concavity.  By this, we mean that, if $f$ denotes the probability mass function of $\tilde{\Pi}_B(k)$, then the linear interpolant to $\{(i,f(i/B)):i=0,1,\ldots,B\}$ is a log-concave function on $[0,1]$.  Log-concavity is a shape constraint that has received a great deal of attention recently (e.g. \citet{Walther2002,DuembgenRufibach2009,CSS2010}), and at first sight it seems reasonable in our context, because if the summands in \eqref{Eq:simult} were independent, then we would have $\tilde{\Pi}_B(k) \sim \frac{1}{B}\mathrm{Bin}(B,p_{k,\floor{n/2}}^2)$, which is log-concave.

It is indeed possible to obtain a version of Markov's inequality under log-concavity that leads to another improvement in the bound on $\mathbb{E}|\hat{S}_{n,\tau}^{\mathrm{CPSS}} \cap L_\theta|$.  However, we found that in practice, the dependence structure of the summands in \eqref{Eq:simult} meant that the log-concavity constraint was a little too strong.  We therefore consider instead the class of \emph{$r$-concave} distributions, which we claim defines a continuum of constraints that interpolate between log-concavity and unimodality (see Propositions~\ref{prop:log_iff._r} and~\ref{prop:uni_implies_r} below).  This constraint has also been studied recently in the context of density estimation by \citet{SereginWellner2010} and \citet{KoenkerMizera2010}; see also \citet{DJ1988}.

To define the class, we recall that the $r^{\mathrm{th}}$ generalised mean $M_r (a, b; \lambda)$ of $a,b \geq 0$ is given by
\[
 M_r (a, b; \lambda) = \{ (1-\lambda)a^r + \lambda b^r\}^{1/r}
\]
for $r>0$. This is also well-defined for $r<0$ if we take $ M_r (a, b; \lambda) =0$ when $ab=0$, and define $0^r = \infty$. In addition, we may define
\begin{gather*}
 M_0 (a,b ; \lambda) := \lim_{r \to 0} M_r (a, b; \lambda) = a^{1-\lambda}b^\lambda \\
M_{-\infty} (a,b ; \lambda) := \lim_{r \to -\infty} M_r (a, b; \lambda) = \min(a, b).
\end{gather*}
We are now in a position to define $r$-concavity.
\begin{defn}
 A non-negative function $f$ on an interval $I \subset \mathbb{R}$ is \emph{$r$-concave} if for every $x, y \in I$ and $\lambda \in (0,1)$, we have
\[
f((1-\lambda)x +  \lambda y) \geq M_r (f(x), f(y); \lambda).
\]
\end{defn}
\begin{defn}
 A probability mass function $f$ supported on $\{0,\frac{1}{B},\frac{2}{B},\ldots,1\}$ is \emph{$r$-concave} if the linear interpolant to $\{(i,f(i/B)):i=0,1,\ldots,B\}$ is $r$-concave.
\end{defn}
When $r < 0$, it is easy to see that $f$ is $r$-concave if and only if $f^r$ is convex.  Let $\mathcal{F}_r$ denote the class of $r$-concave probability mass functions on $\{0,\frac{1}{B},\frac{2}{B},\ldots,1\}$.  Then each $f \in \mathcal{F}_r$ is unimodal, and as $M_r (a, b; \lambda)$ is non-decreasing in $r$ for fixed $a$ and $b$, we have $\mathcal{F}_r \supset \mathcal{F}_{r'}$ for $r < r'$. Furthermore, $f$ is unimodal if it is $-\infty$-concave, and $f$ is log-concave if it is $0$-concave. The following two results further support the interpretation of $r$-concavity for $r \in [-\infty,0]$ as an interpolation between log-concavity and unimodality.
\begin{prop} \label{prop:log_iff._r}
 A function $f$ is log-concave if and only if it is $r$-concave for every $r<0$.
\end{prop}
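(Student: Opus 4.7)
The plan is to deduce both directions directly from two basic properties of the generalised mean that are already in hand: the monotonicity statement (mentioned just before the proposition) that $r \mapsto M_r(a,b;\lambda)$ is non-decreasing in $r$ for fixed $a,b \geq 0$ and $\lambda \in (0,1)$, and the limiting identity $\lim_{r \to 0} M_r(a,b;\lambda) = M_0(a,b;\lambda) = a^{1-\lambda} b^\lambda$ that appears in the definitions preceding the proposition.

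For the forward implication, I would fix $r < 0$ and suppose $f$ is log-concave on $I$. For any $x,y \in I$ and $\lambda \in (0,1)$, log-concavity gives
\[
f\bigl((1-\lambda)x + \lambda y\bigr) \geq M_0\bigl(f(x),f(y);\lambda\bigr),
\]
and monotonicity of $M_r$ in $r$ yields $M_0(f(x),f(y);\lambda) \geq M_r(f(x),f(y);\lambda)$. Chaining these inequalities shows $f$ is $r$-concave. For the reverse direction, suppose $f$ is $r$-concave for every $r < 0$. Then for each such $r$,
\[
f\bigl((1-\lambda)x + \lambda y\bigr) \geq M_r\bigl(f(x),f(y);\lambda\bigr).
\]
Letting $r \uparrow 0$ and appealing to the limiting identity above (applied at each fixed pair $(f(x),f(y))$) gives log-concavity.

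The only minor subtlety is the boundary case $f(x)f(y) = 0$. Under the convention $0^r = \infty$ for $r < 0$, we have $M_r(f(x),f(y);\lambda) = 0$, matching $M_0 = 0$ in this case; both inequalities then reduce to the trivial $f((1-\lambda)x+\lambda y) \geq 0$, so nothing more is required. I do not anticipate any real obstacle: the proposition is essentially a clean reformulation of the monotonicity and continuity of the generalised mean in its index.
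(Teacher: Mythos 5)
Your proof is correct, and the reverse direction is essentially the paper's argument (the paper phrases it contrapositively: if log-concavity fails at some triple $(x,y,\lambda)$, then since $M_r(f(x),f(y);\lambda) \to f(x)^{1-\lambda}f(y)^{\lambda}$ as $r \to 0$, $r$-concavity must also fail for some $r<0$; you run the same limit in the direct form, which is equivalent). The forward direction is where you diverge: the paper writes $f = e^{-\phi}$ with $\phi$ convex and observes that $f^r = e^{-r\phi}$ is convex (composition of the increasing convex exponential with the convex function $-r\phi$), then invokes the equivalence, noted just after the definition, between $r$-concavity and convexity of $f^r$ for $r<0$. You instead chain the log-concavity inequality with the monotonicity of $r \mapsto M_r(a,b;\lambda)$, i.e.\ the power-mean inequality $M_r \leq M_0$ for $r<0$. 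Both are one-line arguments; yours leans on the power-mean inequality, which the paper asserts without proof in the surrounding discussion, whereas the paper's composition argument is self-contained. Your treatment of the boundary case $f(x)f(y)=0$ under the paper's conventions is also correct and is a point the paper glosses over.
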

\begin{prop} \label{prop:uni_implies_r}
 Let $f$ be a unimodal probability mass function supported on $\{0,\frac{1}{B},\frac{2}{B},\ldots,1\}$ and suppose both that $f(0) < \ldots < f(\frac{l}{B}) = f(\frac{l+1}{B}) = \ldots = f(\frac{u}{B})$ and that $f(\frac{u}{B}) > f(\frac{u+1}{B}) > \ldots > f(1)$, for some $l \leq u$. Then $f$ is $r$-concave for some $r < 0$.
\end{prop}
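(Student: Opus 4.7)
My plan is first to reduce $r$-concavity of the linear interpolant $\tilde f$ of $f$ to a local criterion at each of its breakpoints, and then to verify that criterion using the hypothesis. For $r<0$, $\tilde f$ is $r$-concave on $[0,B]$ if and only if $\tilde f^r$ is convex on $[0,B]$. Because $\tilde f$ is strictly positive and piecewise affine, each piece of $\tilde f^r$ is automatically convex --- the second derivative of $(a+bx)^r$ equals $r(r-1)b^2(a+bx)^{r-2}>0$ when $b\neq 0$, and $\tilde f^r$ is constant where $\tilde f$ is --- so global convexity reduces to comparing one-sided derivatives of $\tilde f^r$ at each interior breakpoint $i\in\{1,\ldots,B-1\}$. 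Writing $s_i:=f((i+1)/B)-f(i/B)$ for the slope of the $i$-th piece, the required condition at $i/B$ is $r\,f(i/B)^{r-1}(s_i-s_{i-1})\geq 0$, which for $r<0$ reduces to $s_i\leq s_{i-1}$.

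The hypothesis furnishes the sign pattern $s_0,\ldots,s_{l-1}>0$, $s_l=\cdots=s_{u-1}=0$ (vacuous if $l=u$), and $s_u,\ldots,s_{B-1}<0$. From this alone, $s_i\leq s_{i-1}$ is automatic at every breakpoint $i\in\{l,l+1,\ldots,u\}\cap\{1,\ldots,B-1\}$: at $i=l$ the slope drops from strictly positive to nonpositive, inside the plateau both slopes vanish, and at $i=u$ it drops from zero (or positive, when $l=u$) to strictly negative. The remaining breakpoints are $i\in\{1,\ldots,l-1\}\cup\{u+1,\ldots,B-1\}$, lying strictly inside one of the two monotone wings, where the relative magnitudes of $s_i$ and $s_{i-1}$ are not pinned down by strict monotonicity.

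For those remaining breakpoints I plan to abandon the derivative criterion and instead verify the defining $M_r$-inequality $\tilde f((1-\lambda)x+\lambda y)\geq M_r(\tilde f(x),\tilde f(y);\lambda)$ directly. Two ingredients do most of the work: (i) $M_r(a,b;\lambda)\downarrow\min(a,b)$ as $r\downarrow-\infty$, with uniform convergence on $[f_{\min},1]^2\times[0,1]$ where $f_{\min}:=\min_i f(i/B)>0$; and (ii) strict unimodality of $\tilde f$, which by compactness gives a uniform positive gap $\tilde f((1-\lambda)x+\lambda y)-\min(\tilde f(x),\tilde f(y))\geq c>0$ on any closed subset of $[0,B]^2\times[0,1]$ that avoids the trivial configurations $\lambda\in\{0,1\}$, $x=y$, and $x,y$ both inside the flat top. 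These two ingredients together let a single sufficiently negative $r<0$ settle the $M_r$-inequality off a small neighbourhood of the trivial configurations.

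The main obstacle is precisely this residual neighbourhood. The delicate case is triples $(x,y,\lambda)$ that pinch down onto a convex kink of $\tilde f$ inside a monotone wing: both sides of the $M_r$-inequality then degenerate to equality at comparable rates, and the uniform-gap argument runs out of room. My plan is to match leading-order Taylor expansions of $\tilde f((1-\lambda)x+\lambda y)$ and $M_r(\tilde f(x),\tilde f(y);\lambda)$ about such a kink and show that the leading discrepancy has the correct sign once $r$ is sufficiently negative, with a threshold depending on the worst ratio $|s_i|/|s_{i-1}|$ over the monotone wings and on $f_{\min}$. Taking the most negative of the finitely many thresholds produced by the bulk argument and the local expansions at each kink then yields a single $r<0$ at which $\tilde f$ is $r$-concave.
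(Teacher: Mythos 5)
There is a genuine gap, and it sits exactly where you anticipated trouble. Your reduction of $r$-concavity of the interpolant $\tilde f$ to a one-sided derivative comparison at each breakpoint is correct, and so is the resulting condition $s_i \le s_{i-1}$ --- but note that this condition does not involve $r$ at all: it says the sequence $(f_i)$ is concave, which the hypotheses do not imply (take $B=3$ and $f=(0.05,\,0.1,\,0.6,\,0.25)$, whose increasing wing is strictly convex). Your plan to ``abandon the derivative criterion'' in the wings and verify the $M_r$-inequality directly cannot succeed, because the derivative criterion is a necessary consequence of that inequality: taking $x=i-\delta_1$, $y=i+\delta_2$ with $(1-\lambda)\delta_1=\lambda\delta_2$, one finds $M_r(\tilde f(x),\tilde f(y);\lambda)=f_i+(1-\lambda)\delta_1(s_i-s_{i-1})+O(\delta^2)$, so at a kink with $s_i>s_{i-1}$ the defining inequality fails to \emph{first} order in $\delta$, and the first-order discrepancy is independent of $r$; the $r$-dependence enters only at second order. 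Hence the last step of your programme --- ``show the leading discrepancy has the correct sign once $r$ is sufficiently negative'' --- is impossible, and indeed the proposition is false under the literal reading of the definition you adopted (the linear interpolant of $f$ itself).

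The resolution is that the notion of $r$-concavity actually in force --- the one used in the paper's own proof of this proposition, in Proposition~\ref{prop:rbound} and in the algorithm of Section~\ref{Sec:Alg} --- is that the linear interpolant of $f^r$ (equivalently, the sequence $(f_i^r)$) is convex, i.e.\ $f_i^r\le\tfrac12(f_{i-1}^r+f_{i+1}^r)$, equivalently $f_i\ge M_r\bigl(f_{i-1},f_{i+1};\tfrac12\bigr)$, at every interior lattice point. This is a condition on lattice points only, it genuinely depends on $r$, and it relaxes as $r\to-\infty$ because $M_r\downarrow\min$. Under this reading the argument is three lines, and is the paper's proof: the hypotheses give $f_i>\min(f_{i-1},f_{i+1})$ at every interior $i$ outside the open plateau; since $M_r(f_{i-1},f_{i+1};\tfrac12)\to\min(f_{i-1},f_{i+1})$ as $r\to-\infty$ and $M_r$ is monotone in $r$, a single sufficiently negative $r$ makes all finitely many strict inequalities hold; strictly inside the plateau the three values coincide and equality holds for every $r$. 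None of your compactness or Taylor-expansion machinery is then needed.
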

In Proposition~\ref{prop:rbound} in the Appendix, we present a result that characterises those $r$-concave distributions that attain equality in a version of Markov's inequality for random variables with $r$-concave distributions on $\{0,\frac{1}{B},\frac{2}{B},\ldots,1\}$.  If we assume that $\tilde{\Pi}_B(k)$ is $r$-concave for all $k \in L_\theta$, using \eqref{eq:simult_stab}, for these variables we can obtain a bound of the form
\begin{equation} \label{eq:D_ineq1}
 \mathbb{P}(\hat{\Pi}_B(k) \geq \tau) \leq D(p_{k, \floor{n/2}}^2, 2\tau - 1, B, r) \leq D(\theta^2, 2\tau - 1, B, r)
\end{equation}
where $D(\eta, t, B, r)$ denotes the maximum of $\mathbb{P}(X \geq t)$ over all $r$-concave random variables supported on $\{0,\frac{1}{B},\frac{2}{B},\ldots,1\}$ with $\mathbb{E}(X) \leq \eta$. Although $D$ does not appear to have a closed form, it is straightforward to compute numerically, as we describe in Section~\ref{Sec:Alg}.  The lack of a simple form means a direct analogue Theorem~\ref{thm:Unimodal} is not available. We can nevertheless obtain the following bound on the expected number of low selection probability variables chosen by CPSS:
\begin{equation} \label{eq:D_ineq2}
 \mathbb{E}|\hat{S}_{n,\tau}^{\mathrm{CPSS}} \cap L_\theta| = \sum_{k \in L_\theta} \mathbb{P}(\hat{\Pi}_B(k) \geq \tau) \leq D(\theta^2, 2\tau - 1, B, r) |L_\theta|.
\end{equation}
Our simulation studies suggest that $r=-1/2$ is a sensible choice to use for the bound.  In other words, if $f$ denotes the probability mass function of $\tilde{\Pi}_B(k)$, then the linear interpolant to $\{(i,f(i/B)^{-1/2}):i=0,1,\ldots,B\}$ is typically well approximated by a convex function.  This is illustrated in the bottom left panel of Figure~\ref{fig:compare_dist} (note that the right-hand tail in this plot corresponds to tiny probabilities).

\subsubsection{Lowering the threshold $\tau$}
\label{Sec:lowertau}

The bounds obtained thus far have used the relationship \eqref{eq:simult_stab} to convert a Markov bound for $\tilde{\Pi}_B(k)$ into a corresponding one for the statistic of interest, $\hat{\Pi}_B(k)$. The advantage of this approach is that $\mathbb{E}(\tilde{\Pi}_B(k)) = p_{k, \floor{n/2}} ^2$ is much smaller than $\mathbb{E}(\hat{\Pi}_B(k)) = p_{k, \floor{n/2}}$ for variables with low selection probability, so the Markov bound is quite tight. However, for $\tau$ close to $1/2$, the inequality \eqref{eq:simult_stab} starts to become weak, and bounds can only be obtained for $\tau > 1/2$ in any case.

To solve this problem, we can apply our versions of Markov's inequality directly to $\hat{\Pi}_B(k)$.  We have found, through our simulations, that for variables with low selection probability, the distribution of $\hat{\Pi}_B(k)$ can be modelled very well as a $-1/4$-concave distribution (see Figure~\ref{fig:minus_quarter_concave}). That the distribution of $\hat{\Pi}_B(k)$ is closer to log-concavity than that of $\tilde{\Pi}_B(k)$ is intuitive because although the summands in \eqref{eq:StabSelect} are not independent, terms involving subsamples which have little overlap will be close to independent. If we assume that $\tilde{\Pi}_B(k)$ is $-1/2$-concave and that $\hat{\Pi}_B(k)$ is $-1/4$-concave for all $k \in L_\theta$, we can obtain our best bound
\begin{equation} \label{eq:FinalBound}
  \mathbb{E}|\hat{S}_{n,\tau}^{\mathrm{CPSS}} \cap L_\theta| \leq \min\{D(\theta^2, 2\tau - 1, B, -1/4) , D(\theta, \tau, 2B, -1/2)\} |L_\theta|,
\end{equation}
which is valid for all $\tau \in (\theta, 1]$, provided we adopt the convention that $D(\cdot, t, \cdot, \cdot)=1$ for $t \leq 0$.  The resulting improvements in the bounds can been seen in Figure~\ref{fig:bounds_compare}.  Note the kink in Figure~\ref{fig:bounds_compare} for the $r$-concave bound \eqref{eq:FinalBound} just before $\tau = 0.6$.  This corresponds to the transition from where $D(\theta, \tau, 2B, -1/4)$ is smaller to where $D(\theta^2, 2\tau - 1, B, -1/2)$ is smaller.

We applied the algorithm described in Section~\ref{Sec:Alg} to produce tables of values of 
\[
 \min\{D(\theta^2, 2\tau - 1, 50, -1/2), D(\theta, \tau, 100, -1/4) \}
\]
over a grid of $\theta$ and $\tau$ values; see Table~\ref{tab:r_conc1} and Table~\ref{tab:r_conc2}.

\begin{figure}[htb]
	\begin{center}
		\psfragscanon
		\psfrag{0.0}[][][\scaleof_Comp_plots_9_2_2]{0.0}
		\psfrag{0.2}[][][\scaleof_Comp_plots_9_2_2]{0.2}
		\psfrag{0.4}[][][\scaleof_Comp_plots_9_2_2]{0.4}
		\psfrag{0.6}[][][\scaleof_Comp_plots_9_2_2]{0.6}
		\psfrag{0.8}[][][\scaleof_Comp_plots_9_2_2]{0.8}
		\psfrag{1.0}[][][\scaleof_Comp_plots_9_2_2]{1.0}
		\psfrag{0.1}[][][\scaleof_Comp_plots_9_2_2]{0.1}
		\psfrag{0.3}[][][\scaleof_Comp_plots_9_2_2]{0.3}
		\psfrag{0.5}[][][\scaleof_Comp_plots_9_2_2]{0.5}
		\psfrag{5}[][][\scaleof_Comp_plots_9_2_2]{5}
		\psfrag{10}[][][\scaleof_Comp_plots_9_2_2]{10}
		\psfrag{15}[][][\scaleof_Comp_plots_9_2_2]{15}
		\psfrag{20}[][][\scaleof_Comp_plots_9_2_2]{20}
		\psfrag{25}[][][\scaleof_Comp_plots_9_2_2]{25}
		\psfrag{Probability}[][][0.8]{Probability}
		\psfrag{Probability^(-1/4)}[][][0.8]{$\mathrm{Probability}^{-1/4}$}
		\includegraphics[width = \textwidth]{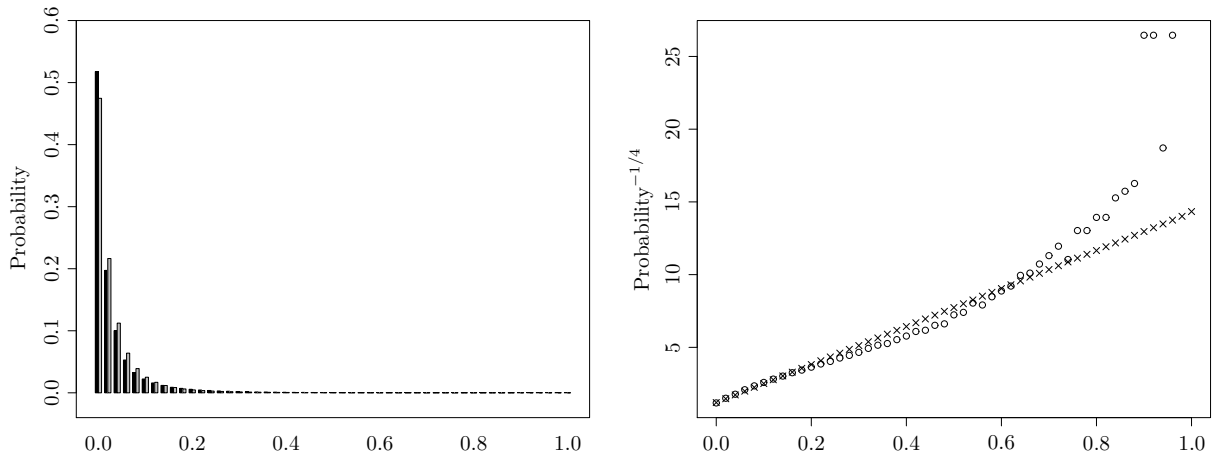}
  \caption{A typical example of the probability mass function of $\hat{\Pi}_{25}(k)$ for $k \in L_{q/p}$ (black bars and circles), alongside the $-1/4$-concave distribution (grey bars and crosses), which has maximum tail probability beyond 0.4.
}
  \label{fig:minus_quarter_concave}
	\end{center}
\end{figure}
\subsection{How to use these bounds in practice}
\label{Sec:Practice}
The quantities $|L_\theta|$ and $\mathbb{E}|\hat{S}_{\floor{n/2}} \cap L_\theta|$, which appear on the right hand sides of the bounds, will in general be unknown to the statistician. Thus when using the bounds, they will typically need to be replaced by $p$ and $q$ respectively.  In addition, several parameters must be selected, and in this section we go through each of these in turn and give guidance on how to choose them. 

\paragraph*{Choice of $B$.} We recommend $B=50$ as a default value.  Choosing $B$ larger than this increases the computational burden, and may lead to the $r$-concavity assumptions being violated.
\paragraph*{Choice of $\theta$.} As mentioned at the beginning of Section~\ref{sec:ImprovedTail}, $\theta = q/p$ is a natural choice. In other words, we regard the below average selection probability variables as the irrelevant variables.  Other choices of $\theta$ are possible, but the use of \eqref{eq:D_ineq1} and \eqref{eq:D_ineq2} to construct the bound suggests that the inequality will be tightest when most of the variables have a selection probability close to $\theta$. 
\paragraph*{Choice of $q$ and threshold $\tau$.} One can regard the choice of $q = \mathbb{E}(|\hat{S}_{\floor{n/2}}|)$ (which is usually fixed through a tuning parameter $\lambda$) as part of the choice of the base selection procedure.  One option is to fix $q$ by varying $\lambda$ at each evaluation of the selection procedure until it selects $q$ variables.  However, if the number of variables selected at each iteration is unknown in advance (e.g. if $\lambda$ is fixed, or if cross-validation is used to choose $\lambda$ at each iteration), then $q$ can be estimated by $\sum_{k=1}^p \hat{\Pi}_B(k)$.


An important point to note is that although choosing $\lambda$ or $q$ is usually crucial when carrying out variable selection, this is not the case when using CPSS. Our experience is that the performance of CPSS is surprisingly insensitive to the choice of $q$ (see also \citet{MeinshausenBuhlmann2010}).  That is to say, $L_{q/p}$ does not vary much as $q$ varies, and also the final selected sets for different values of $q$ tend to be similar (where different thresholds are chosen to control the selection of variables in $L_{q/p}$ at a pre-specified level).  Thus, when using CPSS, it is the threshold $\tau$ that plays a role similar to that of a tuning parameter for the base procedure.  The great advantage of CPSS is that our bounds allow one to choose $\tau$ to control the expected number of low selection probability variables selected.

To summarise: we recommend as a sensible default CPSS procedure taking $B=50$ and $\theta = q/p$.  We then choose $\tau$ using the bound~(\ref{eq:FinalBound}) with $|L_\theta|$ replaced by $p$ to control the expected number of low selection probability variables chosen.



\section{Numerical properties}
\subsection{Simulation Study}
\label{Sec:SimStudy}
In this section we investigate the performance and validity of the bounds derived in the previous section by applying CPSS to simulated data.  We consider both linear and logistic regression and different values of $p$ and $n$. In each of these settings, we first generate independent explanatory vectors $X_1, \ldots, X_n$ with each $X_i \sim N_p(0, \Sigma)$.  We use a Toeplitz covariance matrix $\Sigma$ with entries
\[
 \Sigma_{ij} = \rho ^{||i-j| - p/2| - p/2},
\]
and we look at various values of $\rho$ in $[0,1)$.  So the correlation between the components decays exponentially with the distance between them in $\mathbb{Z}_p$.

For linear regression, we generate a vector of errors $\epsilon \sim N_n (0, \sigma^2I)$ and set
\[
 Y = X\beta + \epsilon,
\]
where the design matrix $X$ has $i^\mathrm{th}$ row $X_i ^T$.  The error variance $\sigma^2$ is chosen to achieve different values of the signal-to-noise ratio (SNR), which we define here by
\[
 \mathrm{SNR}^2 = \frac{\mathbb{E} \| X\beta \|^2}{\mathbb{E}\| \epsilon \|^2}.
\]

For logistic regression, we generate independent responses
\[
 Y_i \sim \mathrm{Bin}(1, p_i), \quad i=1,\ldots,n,
\]
where
\[
 \log \left(\frac{p_i}{1 - p_i}\right) = \gamma X_i ^T \beta.
\]
Here $\gamma$ is a scaling factor which is chosen to achieve a particular Bayes error rate.

In both cases, we fix the $p$-dimensional vector of coefficients $\beta$ to have $s \ll p$ non-zero components, $s/2$ of which we choose as equally spaced points within $[-1, -0.5]$ with the remaining $s/2$ equally spaced in $[0.5, 1]$.  The indices of the non-zero components, $S$, are chosen to follow a geometric progression up to rounding, with first term 1 and $(s+1)^{\mathrm{th}}$ term $p+1$.  The values are then randomly assigned to each index in $S$, but this choice is then fixed for each particular simulation setting.

With $\rho>0$, this setup will have several signal variables correlated amongst themselves, and also some signal correlated with noise.  In this way, the framework above includes a very wide variety of different data generating processes on which we can test the theory of the previous section.

By varying the base selection procedure, its tuning parameters, the values of $\rho$, $n$, $p$, $s$ and also the SNR and Bayes error rates, we have applied CPSS in several hundred different simulation settings.  For reasons of space, we present only a subset of these numerical experiments below, but the results from those omitted are not qualitatively different.

In the graphs which follow, we look at CPSS applied to the Lasso \citep{Tibshirani1996}, which we implemented using the package \texttt{glmnet} \citep{FHT2010} in \texttt{R} \citep{Rprog}. We follow the original stability selection procedure put forward in \citet{MeinshausenBuhlmann2010} and compare this to the method suggested by our $r$-concave bound \eqref{eq:FinalBound}.  Thus we first choose the level $l$ at which we wish to control the expected number of low selection probability variables (so we aim to have $\mathbb{E}|\hat{S}_{n,\tau}^{\mathrm{CPSS}} \cap L_{q/p}| \leq l$).  Then we fix $q = \sqrt{0.8lp}$ and set the threshold $\tau$ at 0.9. This ensures that, according to the original worst case bound, we control the expected number of low selection probability variables selected at the required level. In the $r$-concave case, we take our threshold as
\[
 \tilde{\tau} = \min\{\tau \in \{0, 1/2B, \ldots, 1\} : \min\{D(q^2/p^2, 2\tau - 1, B, -1/2), D(q/p, \tau, 2B, -1/4)\} \leq l/p\}.
\]
We also give the results one would obtain using the Lasso alone, but with the benefit of an oracle which knows the optimal value of the tuning parameter $\lambda$. That is, we take $\hat{S}^{\lambda^*} _n$ as our selected set, where
\[
 \lambda^* = \inf\{ \lambda: \mathbb{E}|\hat{S}^\lambda _n \cap L_{q/p}| \leq l\},
\]
and $\hat{S}^\lambda _n$ is the selected set when using the Lasso with tuning parameter $\lambda$ applied to the whole data set.

We present all of our results relative to the performance of CPSS using an oracle-driven threshold $\tau^*$, where $\tau^*$ is defined by
\[
 \tau^* = \min\{\tau \in \{0, 1/2B, \ldots, 1\} : \mathbb{E}|\hat{S}_{n,\tau}^{\mathrm{CPSS}} \cap L_{q/p}| \leq l \}.
\]
Referring to Figures~\ref{fig:gaussian1}-\ref{fig:binomial2}, the heights of the black bars, grey bars and crosses are given by
\begin{equation*}
 \frac{\mathbb{E}|\hat{S}_{n,0.9}^{\mathrm{CPSS}} \cap S|}{\mathbb{E}|\hat{S}_{n,\tau^*}^{\mathrm{CPSS}} \cap S|}, \,
 \frac{\mathbb{E}|\hat{S}_{n,\tilde{\tau}}^{\mathrm{CPSS}} \cap S|}{\mathbb{E}|\hat{S}_{n,\tau^*}^{\mathrm{CPSS}} \cap S|} \quad \text{and} \quad
 \frac{\mathbb{E}|\hat{S}^{\lambda^*} _n \cap S|}{\mathbb{E}|\hat{S}_{n,\tau^*}^{\mathrm{CPSS}} \cap S|},
\end{equation*}
respectively. Thus the heights of the black and grey bars relate to the loss of power in using the threshold suggested by the corresponding bounds. In all of our simulations, we used $B=50$. Each scenario was run 500 times, and in order to determine the set $L_{q/p}$, in each scenario, we applied the particular selection procedure $\hat{S}_{\floor{n/2}}$ to 50,000 independent data sets.

It is immediately obvious from the results that using the $r$-concave bound, we are able to recover significantly more variables in $S$ than when using the the worst case bound.  Furthermore, though it is not shown in the graphs explicitly, we also achieve the required level of error control in all but one case (where the $r$-concavity assumption fails). In fact the one particular example is hardly exceptional in that we have $\mathbb{E}|\hat{S}_{n,\tilde{\tau}}^{\mathrm{CPSS}} \cap L_{q/p}| = 1.034 > 1 = l$. Thus in close accordance with our theory, there are no significant violations of the $r$-concave bound.

We also see that the loss in power due to using $\tilde{\tau}$ rather than $\tau^*$, is very low. In almost all of the scenarios, we are able to select more than 75\% of the signal we could select with the benefit of an oracle, and usually much more than this.  It is interesting that the performance of the oracle CPSS and oracle Lasso procedures are fairly similar.  The key advantage of CPSS is that it allows for error control whereas there is in general no way of determining (or even approximating) the optimal $\lambda^*$ that achieves the required error control.  In fact, the performance of CPSS with our bound is only slightly worse then that of the oracle Lasso procedure, and in a few cases, particularly when $\rho$ is small, it is even slightly better. In the cases where $\rho \geq 0.75$, we see that CPSS is not quite as powerful. This is because having such large correlations between variables causes $\{p_{k, \floor{n/2}}:k = 1, \ldots, p\}$ to be relatively spread out in $[0,1]$. As explained in Section~\ref{Sec:Practice}, we expect our bound to weaken in this situation.  However, even when the correlation is as high as 0.9, we recover a sizeable proportion of the signal we would select had we used the optimal $\tau^*$.
\begin{figure}[p]
  \centering
  \includegraphics[height=\textwidth, clip = true, angle = 270, trim = 0 0 35 22]{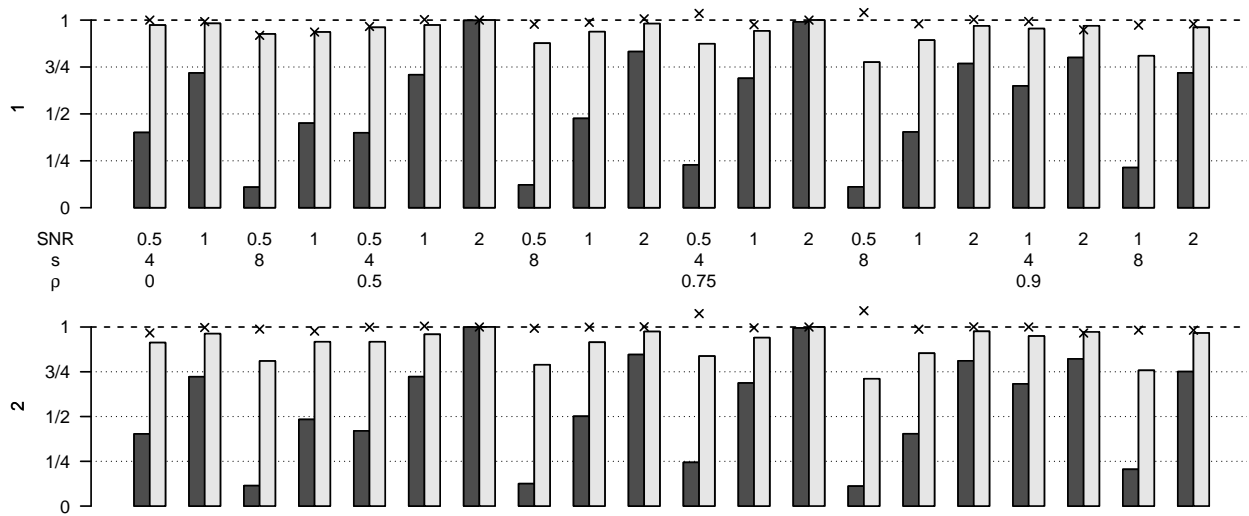}
  \caption{Linear regression with $n=200$, $p=1000$. The black and grey bars correspond to the worst case and $r$-concave procedures respectively, with higher bars being preferred. The crosses correspond to a theoretical oracle-driven Lasso procedure (see the beginning of Section~\ref{Sec:SimStudy} for further details). The $y$-axis label gives the error control level $l$.}
  \label{fig:gaussian1}
\end{figure}
\begin{figure}[pb]
  \centering
  \includegraphics[height=\textwidth, clip = true, angle = 270, trim = 0 0 35 22]{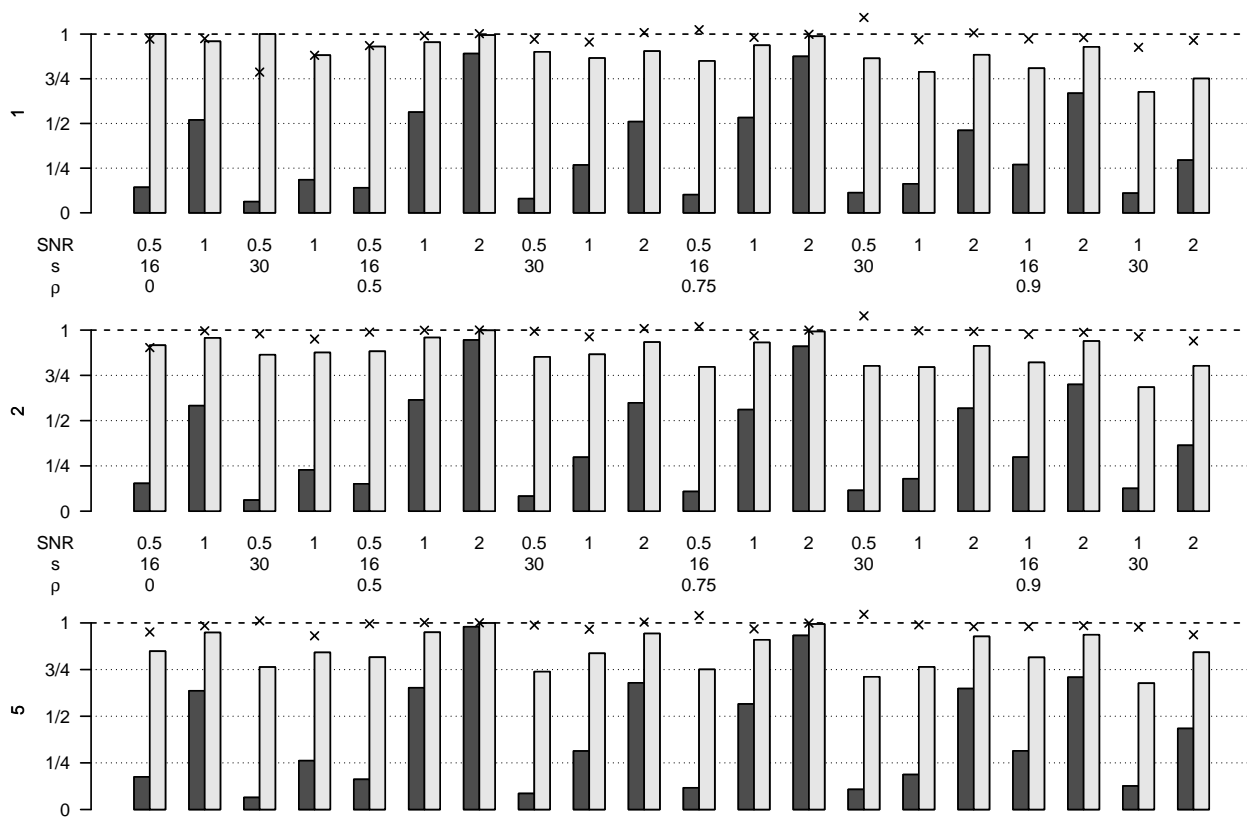}
  \caption{As above but $n=500$, $p=2000$.}
  \label{fig:gaussian2}
\end{figure}
\begin{figure}[pt]
  \centering
  \includegraphics[height=\textwidth, clip = true, angle = 270, trim = 0 0 35 22]{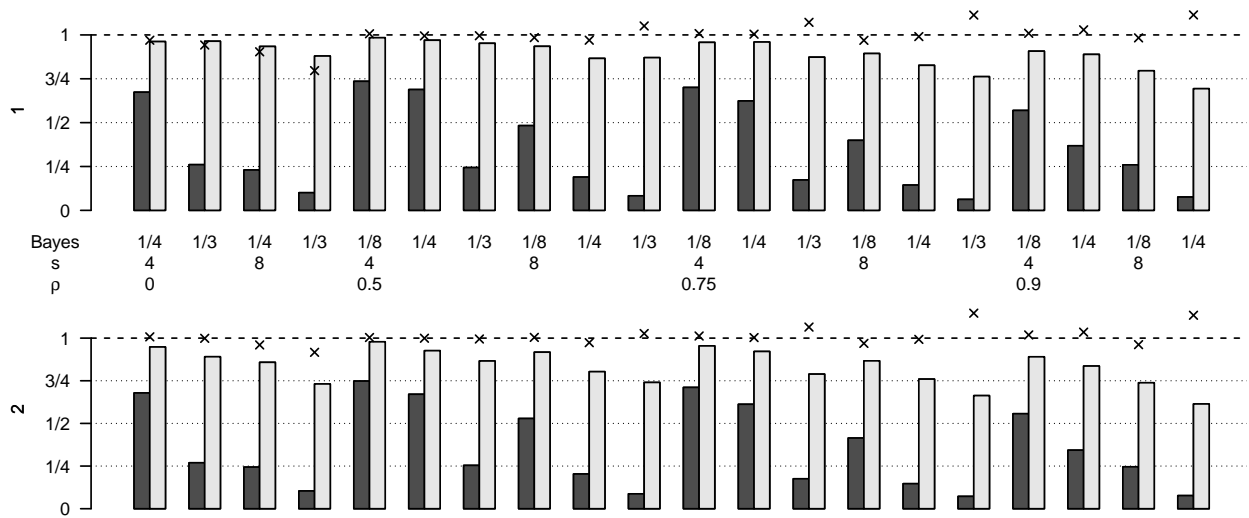}
  \caption{As Figure~\ref{fig:gaussian1} but with logistic regression (and $n=200$, $p=1000$).}
  \label{fig:binomial1}
\end{figure}
\begin{figure}[pb]
  \centering
  \includegraphics[height=\textwidth, clip = true, angle = 270, trim = 0 0 35 22]{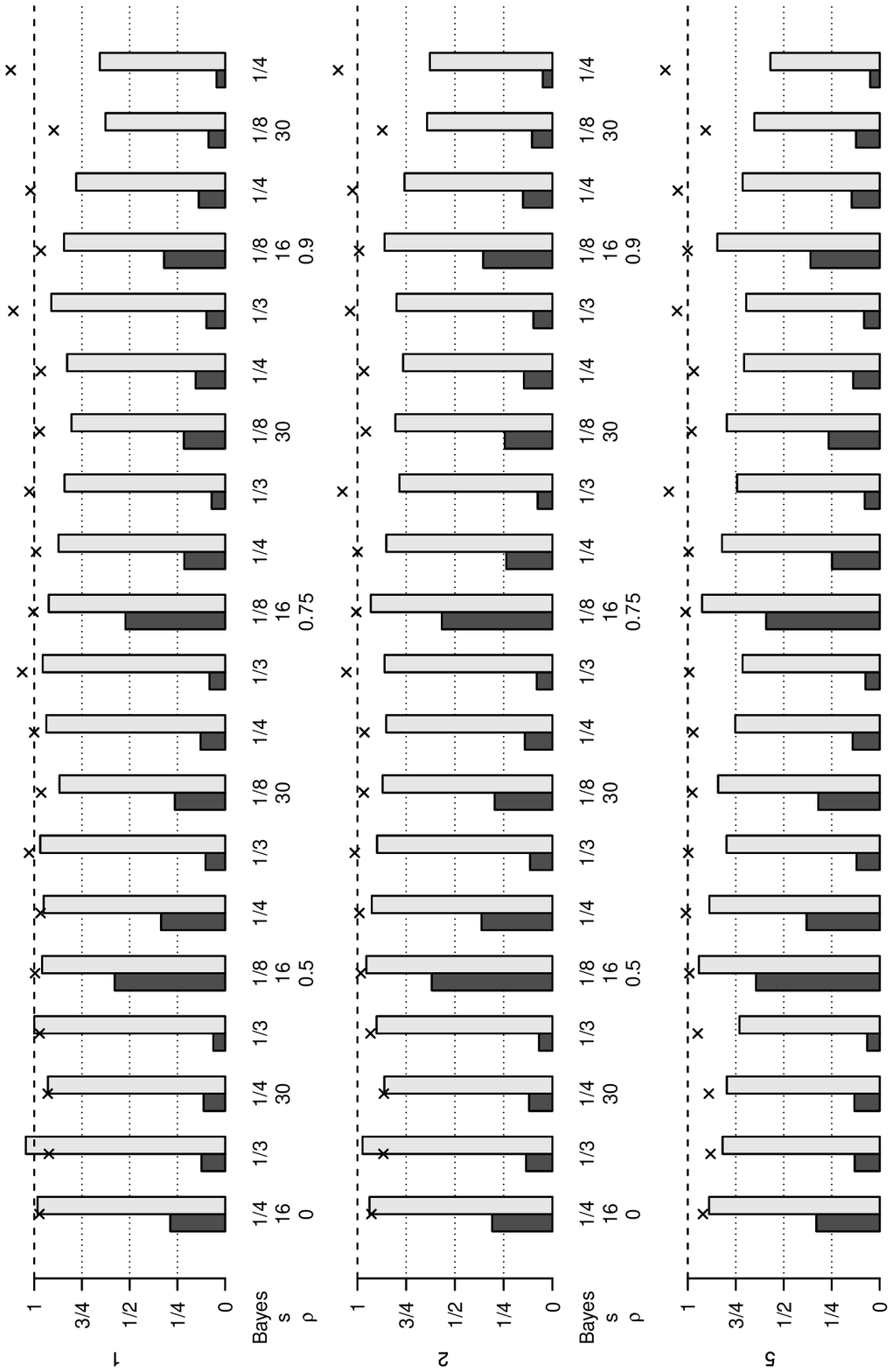}
  \caption{As above but with with $n=500$, $p=2000$.}
  \label{fig:binomial2}
\end{figure}
\subsection{Real data example}
Here we illustrate our CPSS methodology on the widely studied colon data set of \citet{Alon1999}, freely available at \texttt{http://microarray.princeton.edu/oncology/affydata/index.html}.  The data consist of 2000 gene expression levels from 40 colon tumour samples and 22 normal colon tissue samples, measured using Affymetrix oligonucleotide arrays.  Our goal is to identify a small subset of genes which we are confident are linked with the development of colon cancer.  Such a task is important for improving scientific understanding of the disease and for selecting genes as potential drug targets.

The data were first preprocessed by averaging over the expression levels for repeated genes (which had been tiled more than once on each array), log-transforming each gene expression level, standardising each row to have mean zero and unit variance, and finally removing the columns corresponding to control genes, so that $p=1908$ genes remained.  The transformation and standardisation are very common preprocessing steps to reduce skewness in the data and help eliminate the effects of systematic variations between different microarrays (see for example \citet{AmaratungaCabrera} and \citet{DFS2002}).

We applied CPSS with $\ell_1$ (Lasso) penalised logistic regression as the base procedure, with $B=50$, and choosing $\tau$ both using the $r$-concave bound of Section~\ref{Sec:Practice}, and the original bound of \citet{MeinshausenBuhlmann2010}.  We estimated the expected classification error in the two cases by averaging over 128 repetitions of stratified random subsampling validation, taking 8 cancerous and 4 normal observations in each test set. Thus when applying CPSS, we had $n = 40+22-12 = 50$. We looked at $q=8, \, 10$ and $12$, and set $\tau$ to control $\mathbb{E}|\hat{S}_{n,\tau}^{\mathrm{CPSS}} \cap L_{q/p}| \leq l$ with $l=0.1$ and $0.5$.

Rather than subsampling completely at random when using CPSS, we also stratified these subsamples to include the same proportion of cancerous to normal samples as in the training data supplied to the procedure. Without this step, some of the subsamples may not include any samples from one of the classes, and applying $\hat{S}_{\floor{n/2}}$ to such a subsample would give misleading results. Using stratified random subsampling is still compatible with our theory, provided that $\mathbb{E}|\hat{S}_{n,\tau}^{\mathrm{CPSS}} \cap L_{\theta}|$ is interpreted as an expectation over random data which contain the same class proportions as observed in the original data. In general, this approach of stratified random subsampling is useful when the response is categorical.

The results in Table~\ref{tab01} show that, as expected, the new error bounds allow one to select more variables than the conservative bounds of \citet{MeinshausenBuhlmann2010} for the same level of error control, and as a consequence, the expected prediction error is reduced.
Figure~\ref{fig:robustness} demonstrates the robustness of the selected set to the different values of $q$.  Finally, we also applied CPSS on the entire dataset with $q=8$ and $B=50$ and using the $r$-concave bound of Section~\ref{Sec:Practice} to choose $\tau$ to control $\mathbb{E}|\hat{S}_{n,\tau}^{\mathrm{CPSS}} \cap L_{q/p}| \leq 0.5$ (cf. Figure~\ref{fig:genemap}).  We see that with just 5 genes out of 1908, we manage to separate the two classes quite well.

\begin{table}
\caption{\label{tab01} Improvement in classification error (\%) over the naive classifier which always determines the data to be from a cancerous tissue. Thus the classification errors are $33\frac{1}{3}\%$ minus these quantities. We also give the average number of variables selected in parentheses.}
\centering
\fbox{%
\begin{tabular}{lcccc}
&\multicolumn{2}{c}{Worst case procedure} & \multicolumn{2}{c}{$r$-concave procedure} \\
$q$ & $l=0.1$ & $l=0.5$ & $l=0.1$ & $l=0.5$ \\
\hline
8 & 4.9 (0.5) & 11.6 (1.1) & 16 (2.3) & 17.5 (5.1) \\
10 & 0.9 (0.1) & 10.6 (0.9) & 14.7 (1.6) & 15.8 (4.4) \\
12 & 0.0 (0.0) & 9.4 (0.8) & 12.8 (1.1) & 15.8 (4.1) \\
\end{tabular}}
\end{table}

\begin{figure}
\centering
\includegraphics{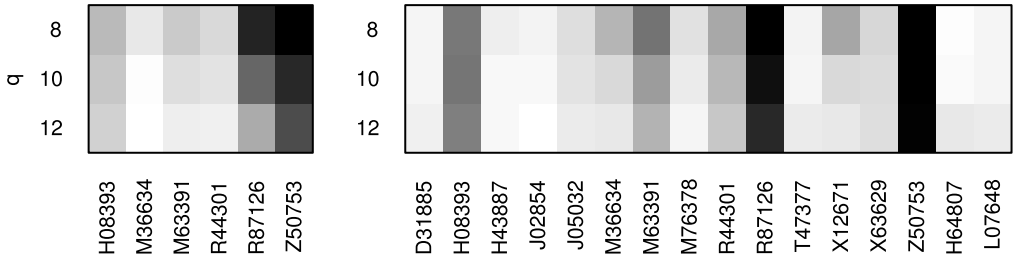}
\caption{\label{fig:robustness} For $l=0.1$ (left) and $l=0.5$ (right), we have plotted the proportion of times a gene was selected by our $r$-concave CPSS procedure for all genes which were selected at least 5\% of the time among the 128 repetitions. Solid black means the gene was selected in every repetition, and white means it was never selected. Thus dark vertical lines indicate that the choice of $q$ has little effect on the end result of CPSS.}
\end{figure}

\begin{figure}
\centering
\includegraphics[width = \textwidth]{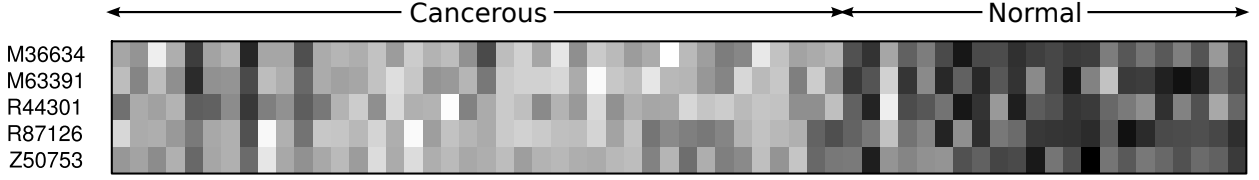}
\caption{\label{fig:genemap} A heatmap of the normalised, centered, log intensity values of the genes selected when we use the $r$-concave bound to choose $\tau$ such that we control $\mathbb{E}|\hat{S}_{n,\tau}^{\mathrm{CPSS}} \cap L_{q/p}| \leq 0.5$.}
\end{figure}

\appendix
\section{Appendix}

\subsection{Proof of Theorem~\ref{thm:hi_low}}

The proof of Theorem~\ref{thm:hi_low} requires the following lemma.
\begin{lemma}
\label{lemma:selprob}
\begin{enumerate}[(i)]
\item If $\tau \in (\frac{1}{2},1]$, then
\[
\mathbb{P}(k \in \hat{S}_{n,\tau}^{\mathrm{CPSS}}) \leq \frac{1}{2\tau-1}p_{k, \floor{n/2}}^2. 
\]
\item If $\tau \in [0,\frac{1}{2})$, then
\[
\mathbb{P}(k \notin \hat{S}_{n,\tau}^{\mathrm{CPSS}}) \leq \frac{1}{1-2\tau}(1-p_{k, \floor{n/2}})^2.
\]
\end{enumerate}
\end{lemma}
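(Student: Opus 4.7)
My plan is to prove both parts by converting statements about $\hat{\Pi}_B(k)$ into statements about $\tilde{\Pi}_B(k)$ (or a complementary analogue) and then applying Markov's inequality, exploiting the fact that $A_{2j-1}$ and $A_{2j}$ are disjoint so the indicators on those subsamples are independent.

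For part (i), I would start with the pointwise inequality noted in \eqref{eq:simult_stab}, namely $1 + \tilde{\Pi}_B(k) \geq 2\hat{\Pi}_B(k)$. This is really an averaged version of the elementary observation that for any $a,b\in\{0,1\}$, $a+b \leq 1 + ab$, applied to $a = \mathbbm{1}_{\{k \in \hat{S}(A_{2j-1})\}}$ and $b = \mathbbm{1}_{\{k \in \hat{S}(A_{2j})\}}$ and then summed over $j$. Consequently
\[
\mathbb{P}(k \in \hat{S}_{n,\tau}^{\mathrm{CPSS}}) = \mathbb{P}(\hat{\Pi}_B(k) \geq \tau) \leq \mathbb{P}\bigl(\tilde{\Pi}_B(k) \geq 2\tau - 1\bigr) \leq \frac{\mathbb{E}\tilde{\Pi}_B(k)}{2\tau - 1},
\]
where the last step is Markov's inequality, valid because $2\tau - 1 > 0$. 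The crucial computation is now $\mathbb{E}\tilde{\Pi}_B(k) = p_{k,\floor{n/2}}^2$: since $A_{2j-1}$ and $A_{2j}$ are disjoint and the $Z_i$'s are i.i.d., the indicators $\mathbbm{1}_{\{k \in \hat{S}(A_{2j-1})\}}$ and $\mathbbm{1}_{\{k \in \hat{S}(A_{2j})\}}$ are independent Bernoulli$(p_{k,\floor{n/2}})$ variables, so their product has expectation $p_{k,\floor{n/2}}^2$.

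For part (ii), I would mirror the previous argument applied to the complementary indicators. Define
\[
\bar{\Pi}_B(k) := \frac{1}{B}\sum_{j=1}^B \mathbbm{1}_{\{k \notin \hat{S}(A_{2j-1})\}}\mathbbm{1}_{\{k \notin \hat{S}(A_{2j})\}}.
\]
The same elementary inequality $(1-a) + (1-b) \leq 1 + (1-a)(1-b)$ for $a,b\in\{0,1\}$ gives, after averaging, $2(1 - \hat{\Pi}_B(k)) \leq 1 + \bar{\Pi}_B(k)$. Then
\[
\mathbb{P}(k \notin \hat{S}_{n,\tau}^{\mathrm{CPSS}}) = \mathbb{P}(\hat{\Pi}_B(k) < \tau) \leq \mathbb{P}\bigl(\bar{\Pi}_B(k) \geq 1 - 2\tau\bigr) \leq \frac{\mathbb{E}\bar{\Pi}_B(k)}{1 - 2\tau},
\]
by Markov (valid since $1 - 2\tau > 0$), and the disjointness-plus-i.i.d. argument again yields $\mathbb{E}\bar{\Pi}_B(k) = (1 - p_{k,\floor{n/2}})^2$, completing the bound.

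There is no substantive obstacle; the only conceptual point worth highlighting is the independence coming from the disjointness of the complementary pair together with the i.i.d.\ assumption on $Z_1,\ldots,Z_n$, without which the square $p_{k,\floor{n/2}}^2$ (and its complementary analogue) would not appear. Everything else is the pointwise inequality converting a statement about $\hat{\Pi}_B(k)$ into one about $\tilde{\Pi}_B(k)$ or $\bar{\Pi}_B(k)$, followed by a one-line application of Markov's inequality.
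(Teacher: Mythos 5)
Your proposal is correct and follows essentially the same route as the paper: the pointwise inequality $1+\tilde{\Pi}_B(k) \geq 2\hat{\Pi}_B(k)$ (equivalently the nonnegativity of $\frac{1}{B}\sum_j(1-\mathbbm{1}_{\{k\in\hat{S}(A_{2j-1})\}})(1-\mathbbm{1}_{\{k\in\hat{S}(A_{2j})\}})$), the computation $\mathbb{E}\tilde{\Pi}_B(k)=p_{k,\floor{n/2}}^2$ via conditional independence on the complementary pair, Markov's inequality, and for part (ii) the same argument applied to the complement $\hat{N}_n$, which is exactly your $\bar{\Pi}_B(k)$.
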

\begin{proof}
(i) Let $\mathcal{A} = \{(A_{2j-1},A_{2j}):j = 1,\ldots,B\}$ be randomly chosen independent pairs of subsets of $\{1,\ldots,n\}$ of size $\floor{n/2}$ such that $A_{2j-1} \cap A_{2j} = \emptyset$.  Then 
\begin{equation} 
\label{eq:simult}
0 \leq \frac{1}{B} \sum_{j=1}^B \bigl\{1 - \mathbbm{1}_{\{k \in \hat{S}(A_{2j-1})\}}\bigr\}\bigl\{1 - \mathbbm{1}_{\{k \in \hat{S}(A_{2j})\}}\bigr\} = 1 - 2\hat{\Pi}_B(k) + \tilde{\Pi}_B(k).
\end{equation}
Now $\mathbb{E}\{\tilde{\Pi}_B(k)\}= \mathbb{E}\{\mathbb{E}(\tilde{\Pi}_B(k) | \mathcal{A})\} = p_{k, \floor{n/2}} ^2$ because $\hat{S}(A_{2j-1})$ and $\hat{S}(A_{2j})$ are independent conditional on $\mathcal{A}$.  It follows using \eqref{eq:simult} that 
\begin{align}
\label{Eq:Markov}
\mathbb{P}(k \in \hat{S}_{n,\tau}^{\mathrm{CPSS}}) = \mathbb{P}\{\hat{\Pi}_B(k) \geq \tau\} \leq \mathbb{P}\bigl\{\tfrac{1}{2}(1+\tilde{\Pi}_B(k)) \geq \tau \bigr\} &= \mathbb{P}\{\tilde{\Pi}_B(k) \geq 2\tau -1\} \nonumber \\
 & \leq \frac{1}{2\tau - 1} p_{k, \floor{n/2}}^2,
\end{align}
where we have used Markov's inequality in the final step.  

(ii) Define $\hat{\Pi}_B^{\hat{N}_n}$ and $\tilde{\Pi}_B^{\hat{N}_n}$ by replacing $\hat{S}_n$ with $\hat{N}_n:= \{1,\ldots,p\} \setminus \hat{S}_n$ in the definitions of $\hat{\Pi}_B$ and $\tilde{\Pi}_B$ respectively.  Then, using the bound corresponding to \eqref{eq:simult} and Markov's inequality again,
\begin{align*}
\mathbb{P}(k \notin \hat{S}_{n,\tau}^{\mathrm{CPSS}}) = \mathbb{P}\{\hat{\Pi}_B(k) < \tau\} = \mathbb{P}\{\hat{\Pi}_B^{\hat{N}_n}(k) > 1 - \tau\} &\leq \mathbb{P}\{\tilde{\Pi}_B^{\hat{N}_n}(k) > 1-2\tau\} \\
&\leq \frac{1}{1 - 2\tau} (1-p_{k, \floor{n/2}})^2 . 
\end{align*}
\end{proof}

\begin{prooftitle}{of Theorem~\ref{thm:hi_low}}
(i) Note that
\[
\mathbb{E}|\hat{S}_{\floor{n/2}} \cap L_\theta| = \mathbb{E}\biggl(\sum_{k=1}^p \mathbbm{1}_{\{k \in \hat{S}_{\floor{n/2}}\}} \mathbbm{1}_{\{p_{k,\floor{n/2}} \leq \theta\}}\biggr) = \sum_{k=1}^p p_{k, \floor{n/2}}\mathbbm{1}_{\{p_{k,\floor{n/2}} \leq \theta\}}.
\]
By Lemma~\ref{lemma:selprob}, it follows that 
\begin{align*}
\mathbb{E}|\hat{S}_{n,\tau}^{\mathrm{CPSS}} \cap L_\theta| = \mathbb{E}\biggl( \sum_{k=1}^p \mathbbm{1}_{\{k \in \hat{S}_{n,\tau}^{\mathrm{CPSS}}\}} & \mathbbm{1}_{\{p_{k,\floor{n/2}} \leq \theta\}}\biggr) = \sum_{k=1}^p \mathbb{P}(k \in \hat{S}_{n,\tau}^{\mathrm{CPSS}}) \mathbbm{1}_{\{p_{k,\floor{n/2}} \leq \theta\}} \\
&\hfill \leq \frac{1}{2\tau-1}\sum_{k=1}^p p_{k, \floor{n/2}}^2 \mathbbm{1}_{\{p_{k,\floor{n/2}} \leq \theta\}} \leq \frac{\theta}{2\tau -1}\mathbb{E}|\hat{S}_{\floor{n/2}} \cap L_\theta|. 
\end{align*}
(ii) This proof is very similar to that of (i) and is omitted. $\hfill \Box$
\end{prooftitle}

\subsection{Proof of Theorem~\ref{thm:Unimodal}}

The proof of Theorem~\ref{thm:Unimodal} requires several preliminary results, and we use the following notation.  Let $G$ denote the finite lattice $\{0,\tfrac{1}{B},\tfrac{2}{B},\ldots,1\}=\tfrac{1}{B} \mathbb{Z} \cap [0,1]$.  If $f$ is a probability mass function on $G$, we write $f_i$ for $f(i/B)$, thereby associating $f$ with $(f_0,f_1,\ldots,f_B) \in \mathbb{R}^{B+1}$.

For $t \in G$, we denote the probability that a random variable distributed according to $f$ takes values greater than or equal to $t$ by $\mathcal{T}_t(f):=\sum_{i \geq Bt} f_i$.  We also write $\mathcal{E}(f):=\sum_{i=1}^B \frac{i}{B}f_i$ for the expectation of this random variable and $\mathrm{supp}(f) := \{i/B \in G:f_i > 0\}$ for the support of $f$.


%
Let $\mathcal{U}$ be the set of all unimodal probability mass functions $f$ on $G$, and let $\mathcal{U}_\eta = \{f \in U:\mathcal{E}(f) \leq \eta\}$.   We consider the problem of maximising $\mathcal{T}_t$ over $f \in \mathcal{U}_\eta$.  Since the cases $\eta = 0$ and $t \leq \eta$ are trivial, there is no loss of generality in assuming throughout that $0 < \eta < t$ and $t \in G$, so in particular $t \geq 1/B$.   
\begin{lemma} 
\label{lemma:u_exists}
There exists a maximiser of $\mathcal{T}_t$ in $\mathcal{U}_\eta$.
\end{lemma}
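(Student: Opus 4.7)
The plan is to argue by compactness: identify $\mathcal{U}_\eta$ with a subset of $\mathbb{R}^{B+1}$ via the correspondence $f \leftrightarrow (f_0,\ldots,f_B)$, show that this subset is compact, observe that $\mathcal{T}_t$ is linear (hence continuous) in the coordinates $(f_0,\ldots,f_B)$, and then invoke the extreme value theorem. Linearity of $\mathcal{T}_t$ is immediate once one notes that since $t \in G$, $Bt$ is an integer and $\mathcal{T}_t(f) = \sum_{i=Bt}^{B} f_i$, so there is no issue of discontinuity at the threshold. Boundedness is also immediate: each $f_i \in [0,1]$.

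The substance of the proof is therefore to verify that $\mathcal{U}_\eta$ is closed in $\mathbb{R}^{B+1}$. Three of the defining conditions are obviously closed, as they are given by the (weak) linear inequalities $f_i \geq 0$, the linear equality $\sum_{i=0}^B f_i = 1$, and the linear inequality $\mathcal{E}(f) = \sum_{i=1}^B (i/B) f_i \leq \eta$. The only condition that requires a moment's thought is unimodality, because it is defined existentially: $f$ is unimodal if there exists some mode $m \in \{0,1,\ldots,B\}$ such that $f_0 \leq f_1 \leq \cdots \leq f_m \geq f_{m+1} \geq \cdots \geq f_B$. I would handle this by writing $\mathcal{U} = \bigcup_{m=0}^{B} \mathcal{U}^{(m)}$, where $\mathcal{U}^{(m)}$ is the set of $f$ satisfying the above inequalities for that particular $m$. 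Each $\mathcal{U}^{(m)}$ is cut out by finitely many weak linear inequalities and is therefore closed, and a finite union of closed sets is closed, so $\mathcal{U}$ is closed and hence so is $\mathcal{U}_\eta$.

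It remains to note that $\mathcal{U}_\eta$ is nonempty (which is needed so that the supremum is not over the empty set); this is clear because $f = \delta_0$, that is $f_0 = 1$ and $f_i = 0$ for $i \geq 1$, is unimodal and satisfies $\mathcal{E}(f) = 0 \leq \eta$. Combining the three facts — $\mathcal{U}_\eta$ is nonempty, closed, and bounded in the finite-dimensional space $\mathbb{R}^{B+1}$, and $\mathcal{T}_t$ is continuous — gives the existence of a maximiser by the extreme value theorem.

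I do not anticipate any real obstacles: the entire argument is a routine compactness plus continuity verification, and the only mildly delicate point is recognising that unimodality, being defined by an existential quantifier over the mode location, must be handled as a finite union of closed polyhedral sets rather than by a single system of linear inequalities.
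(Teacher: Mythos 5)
Your proof is correct and follows essentially the same route as the paper: compactness of $\mathcal{U}_\eta$ in $\mathbb{R}^{B+1}$ together with continuity (linearity) of $\mathcal{T}_t$. The only difference is cosmetic: you verify closedness of the unimodality constraint by writing $\mathcal{U}$ as a finite union of closed polyhedral sets indexed by the mode location, whereas the paper shows the complement (the set of non-unimodal $f$) is open; both are valid, and you additionally note the (needed but easy) non-emptiness of $\mathcal{U}_\eta$.
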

\begin{proof}
Since $\mathcal{T}_t:\mathbb{R}^{B+1} \to \mathbb{R}$ is linear and therefore continuous, it suffices to show that $\mathcal{U}_\eta \subset \mathbb{R}^{B+1}$ is closed and bounded.  Now $\mathcal{U}_\eta$ is bounded as $\mathcal{U}_\eta \subset [0,1]^{B+1}$.  Moreover, the hyperplane $H = \{(x_0,\ldots,x_B): x_0 + x_1 + \ldots + x_B = 1\}$ is closed.  Also, $\mathcal{E}$ is a continuous function on $\mathbb{R}^{B+1}$, so $\mathcal{E}^{-1}([0,\eta])$ is closed.  Now let $O=\{f \in \mathbb{R}^{B+1} : f \text{ is not unimodal}\}$.  If $f \in O$ then there must exist $i_1 < i_2 < i_3$ such that $f_{i_2} < \min\{f_{i_1}, f_{i_3}\}$.  Clearly this inequality must hold for all $g$ in a sufficiently small open ball about $f$, so $O$ is open. We see that
\[
\mathcal{U}_\eta = H \cap \mathcal{E}^{-1}([0,\eta]) \cap O^c .
\]
Thus $\mathcal{U}_\eta$ is an intersection of closed sets and hence is closed.
\end{proof}
We will make frequent use of the following simple proposition in subsequent proofs.
\begin{prop} 
\label{prop:ineq}
Suppose that $(x_1,\ldots,x_n) \in \mathbb{R}^n$ and $(y_1,\ldots,y_n) \in \mathbb{R}^n$ satisfy
\[
\sum_{i=1} ^n x_i = \sum_{i=1} ^n y_i,
\]
and that there exists some $i^* \in \{1,\ldots, n\}$ with $x_i \geq y_i$ for all $i \leq i^*$ and $x_i \leq y_i$ for all $i > i^*$. Then
\[
 \sum_{i=1} ^n i x_i \leq \sum_{i=1} ^n i y_i,
\]
with equality if and only if $x_i=y_i$ for $i=1,\ldots,n$.
\end{prop}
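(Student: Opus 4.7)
The plan is to reduce everything to a single sign argument. I would set $d_i := x_i - y_i$ for $i=1,\ldots,n$, so that the hypothesis $\sum_i x_i = \sum_i y_i$ becomes $\sum_{i=1}^n d_i = 0$, and the crossover condition becomes $d_i \geq 0$ for $i \leq i^*$ and $d_i \leq 0$ for $i > i^*$. The goal $\sum i x_i \leq \sum i y_i$ is then equivalent to $\sum_{i=1}^n i\, d_i \leq 0$.

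The key trick is to centre the index: because $\sum_i d_i = 0$, we may subtract $i^* \sum_i d_i = 0$ from $\sum i d_i$ without changing its value, obtaining
\[
\sum_{i=1}^n i\, d_i \;=\; \sum_{i=1}^n (i - i^*)\, d_i.
\]
Now inspect the two ranges. For $i \leq i^*$ we have $i - i^* \leq 0$ and $d_i \geq 0$, so each term $(i-i^*)d_i$ is non-positive. For $i > i^*$ we have $i - i^* > 0$ and $d_i \leq 0$, so again the term is non-positive. Summing gives $\sum_i (i-i^*)d_i \leq 0$, which is what we wanted.

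For the equality characterisation, note that every summand $(i - i^*)d_i$ is non-positive, so the sum vanishes iff each summand vanishes. The factor $i - i^*$ is zero only at $i = i^*$, so we must have $d_i = 0$ for every $i \neq i^*$. Combined with $\sum_i d_i = 0$ this forces $d_{i^*} = 0$ as well, yielding $x_i = y_i$ for all $i$, as claimed. There is no real obstacle here: the whole proof rests on the centering identity together with the sign pattern imposed by the crossover hypothesis.
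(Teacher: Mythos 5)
Your proof is correct and rests on the same idea as the paper's: comparing the weight $i$ to the pivot $i^*$ and exploiting the sign pattern of $x_i-y_i$ on either side of $i^*$ (the paper writes this as the chain $\sum_{i \leq i^*} i(x_i - y_i) \leq i^* \sum_{i \leq i^*} (x_i - y_i) = i^* \sum_{i > i^*} (y_i - x_i) \leq \sum_{i > i^*} i(y_i - x_i)$, which is your centering identity in disguise). Your explicit treatment of the equality case, including the observation that $d_{i^*}=0$ is forced by $\sum_i d_i = 0$, is a welcome addition since the paper leaves that part implicit.
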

\begin{proof}
We have
\[
\sum_{i \leq i^*} i(x_i - y_i) \leq i^* \sum_{i \leq i^*} (x_i - y_i) = i^* \sum_{i > i^*} (y_i - x_i) \leq \sum_{i > i^*} i(y_i - x_i). 
\]
\end{proof}
The following result characterises the extremal elements of $\mathcal{U}_\eta$ in the sense of maximising the tail probability $\mathcal{T}_t$.  In particular, it shows that such extremal elements can take only one of two simple forms.
\begin{prop} \label{prop:ubound}
Any maximiser $f^* \in \mathcal{U}_\eta$ of $\mathcal{T}_t$ satisfies
\begin{enumerate}[(i)]
  \item $\mathcal{E}(f^*) = \eta$,
  \item writing $i_M$ for $B\max(\supp(f^*))$, we have either 
  \begin{enumerate}[(a)]
    \item $f^* _0 > f_1^* = f_2^* = \ldots = f_{i_{M-1}}^* \geq f^* _{i_M}$, or
    \item $i_M = t$ and $f^* _0 = f_1^* = \ldots = f_{i_{M-1}}^* \leq f^* _{i_M}$.
  \end{enumerate}
\end{enumerate}
\end{prop}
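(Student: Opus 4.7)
By Lemma~\ref{lemma:u_exists}, a maximiser $f^* \in \mathcal{U}_\eta$ exists. My plan is to prove both parts by perturbation: if $f^*$ is not of the stated form, I will construct some $f' \in \mathcal{U}_\eta$ with $\mathcal{T}_t(f') > \mathcal{T}_t(f^*)$, contradicting maximality.

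For part (i), suppose $\mathcal{E}(f^*) < \eta$. Since $\eta < t$, the mass cannot all lie at or above $Bt$, so there is some index $i < Bt$ with $f^*_i > 0$. The strategy is to move a small amount $\epsilon > 0$ of mass from such an index to $Bt$: this raises $\mathcal{T}_t$ by $\epsilon$, while the expectation increases by only $\epsilon(Bt - i)/B$, so for sufficiently small $\epsilon$ we stay within $\mathcal{U}_\eta$. The only subtlety is preserving unimodality, which can fail naively when adjacent values of $f^*$ near $Bt$ are equal; in such cases I would combine the shift with a simultaneous expectation-neutral adjustment of the equal block, using the slack $\eta - \mathcal{E}(f^*) > 0$ to absorb the perturbation.

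For part (ii), given $\mathcal{E}(f^*) = \eta$, I split according to whether $f^*_0 > f^*_1$ (case (a)) or $f^*_0 = f^*_1$ (case (b)). In case (a), the mode is uniquely $0$, so $f^*_1 \geq \cdots \geq f^*_{i_M}$ and the inequality $f^*_1 \geq f^*_{i_M}$ is automatic; to establish the flatness $f^*_1 = \cdots = f^*_{i_M - 1}$, suppose instead $f^*_j > f^*_{j+1}$ for some $1 \leq j \leq i_M - 2$. I would then construct an $\epsilon$-perturbation that decreases $f^*_j$, increases $f^*_{j+1}$, and transfers the remainder to $Bt$ (or $i_M$), choosing the weights from a $2 \times 2$ linear system that enforces normalisation and expectation preservation; a short calculation using Proposition~\ref{prop:ineq} shows that the resulting change in $\mathcal{T}_t$ is strictly positive, and unimodality holds for small $\epsilon$ thanks to the strict drop $f^*_j > f^*_{j+1}$. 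In case (b), an analogous perturbation (now flattening the ascending part from the left) yields $f^*_0 = \cdots = f^*_{i_M - 1}$. Finally, $i_M = Bt$ must hold: if $i_M > Bt$, then shifting mass from $i_M$ to $Bt$ (both in the tail) preserves $\mathcal{T}_t$ but strictly lowers $\mathcal{E}$, contradicting (i); and $i_M < Bt$ would force $\mathcal{T}_t(f^*) = 0$, which cannot be optimal since $\eta < t$ admits distributions with positive tail. The main obstacle throughout is the simultaneous management of normalisation, expectation and unimodality when designing the perturbations; the required case analysis (on which of the inequalities defining $\mathcal{U}_\eta$ are tight at $f^*$) remains tractable thanks to the finite lattice.
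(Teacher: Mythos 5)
Your overall strategy (perturb a putative maximiser to contradict optimality) is sound, and the paper's proof is also a perturbation argument, but several of the specific perturbations you propose do not do what you claim. The most serious problem is in your case (a) of part (ii): a perturbation that decreases $f^*_j$, increases $f^*_{j+1}$ and ``transfers the remainder to $Bt$'' cannot preserve both total mass and expectation with those signs. Writing $\delta_j,\delta_{j+1},\delta_{Bt}$ for the changes, the constraints $\delta_j+\delta_{j+1}+\delta_{Bt}=0$ and $j\delta_j+(j+1)\delta_{j+1}+Bt\,\delta_{Bt}=0$ force $\delta_j=(Bt-j-1)\,\delta_{Bt}$, so a tail gain $\delta_{Bt}>0$ requires $\delta_j>0$ and $\delta_{j+1}<0$ --- the opposite of what you wrote; as described, your perturbation withdraws mass from $Bt$ and strictly \emph{decreases} $\mathcal{T}_t$. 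Moreover, the claim that ``$i_M=Bt$ must hold'' is false in case (a): the extremal distributions of type (ii)(a) have $i_M$ equal to the optimal $s^*\in\{2Bt-2,\,2Bt-1\}$ from problem $(P)$ in the proof of Theorem~\ref{thm:UniMarkov}, which exceeds $Bt$. Your argument for it (shift mass from $i_M$ to $Bt$) fails to produce a contradiction precisely because the shifted function is not unimodal: once $f^*_1=\cdots=f^*_{i_M-1}$ with $f^*_0>f^*_1$ and $i_M>Bt$, the shift gives $f^*_{Bt-1}<\min\bigl(f^*_0,\,f^*_{Bt}+\epsilon\bigr)$, a forbidden valley. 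The proposition asserts $i_M=Bt$ only in case (b).

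The remaining gaps are of the same flavour: almost every local perturbation you propose can destroy unimodality, and your patches target the wrong failure mode. In part (i), moving mass from a single index $i<Bt$ to $Bt$ creates a valley whenever some intermediate index carries strictly less mass (take $f^*=(0.9,0.1,0,0,0)$ with $Bt=3$: every $\epsilon>0$ breaks unimodality, and the obstruction is not ``equal adjacent values near $Bt$''); the paper instead adds a common constant to $f^*_i$ for all $i$ above the minimum of the support, which preserves unimodality automatically. Likewise the sign-corrected perturbation in (ii)(a) (increase $f^*_j$, decrease $f^*_{j+1}$) creates a valley at $j-1$ whenever $f^*_{j-2}>f^*_{j-1}=f^*_j$. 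Finally, your dichotomy $f^*_0>f^*_1$ versus $f^*_0=f^*_1$ omits $f^*_0<f^*_1$, a legitimate unimodal configuration that must be handled (and which does occur for the type (b) extremals). The paper sidesteps all of these difficulties with global block-averaging constructions (replace $f^*_0,\ldots,f^*_{Bt-1}$ by their mean, or flatten the tail), which are unimodal by construction, and then applies Proposition~\ref{prop:ineq} together with part (i): any strict decrease in $\mathcal{E}$ would yield a maximiser violating (i), so equality holds and $f^*$ must already have the averaged form. I would rebuild your argument around that device rather than trying to repair the local perturbations.
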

\begin{proof}
(i) Suppose $f^* \in \mathcal{U}_\eta$ maximises $\mathcal{T}_t$, but that $\mathcal{E}(f^*)<\eta$. Define $i_m:=\min(\supp(f^*))$. As $\eta<\tau$, we must have $i_m<Bt$. Define $g$ by
\[
g_i= \left\{ \begin{array}{ll}
       0 & \text{ if } i<i_m \\
      f^*_i - \epsilon_1 & \text{ if } i=i_m \\
    f^*_i + \epsilon_2 & \text{ if } i>i_m \\
      \end{array} \right.
\]
where $\epsilon_1, \, \epsilon_2 >0$ are chosen such that $\sum_{i=0} ^B g_i = 1$, but are small enough that $\mathcal{E}(g) \leq \eta$. Then $g \in \mathcal{U}_\eta$ but $\mathcal{T}_t(g)>\mathcal{T}_t(f^*)$, a contradiction.

(ii) Suppose first that there exists a mode of $f^*$ which is at least $t$.  Let $g \in \mathcal{U}_\eta$ be such that $g_i=f^*_i$ for $i \geq Bt$ and $g_i= \frac{1}{Bt}\sum_{\ell=0}^{Bt-1} f_\ell^*$ for $i < Bt$. As $f_0^* \leq f_1^* \leq \ldots \leq f_{Bt}^*$, we can apply Proposition~\ref{prop:ineq} to see that
\begin{equation} \label{eq:exp1}
 \mathcal{E}(g) \leq \mathcal{E}(f^* ).
\end{equation}
But $\mathcal{T}_t(g) = \mathcal{T}_t(f^*)$, so by optimality of $f^*$ we must have equality in \eqref{eq:exp1}. Thus Proposition~\ref{prop:ineq} gives us that $f^* = g$. 

Next, define $h \in \mathcal{U}_\eta$ by $h_i=f^* _i$ for $i < Bt$, $h_{Bt}= \mathcal{T}_t(f^*)$, and $h_i = 0$ for $i > Bt$. Then $\mathcal{T}_t(h) = \mathcal{T}_t(f^*)$.  Again Proposition~\ref{prop:ineq} and the optimality of $f^*$ give that $f^* = h$. Thus $f^*$ satisfies property (ii)(b) of the theorem. 

Now suppose that there is no mode of $f^*$ which is at least $t$, so $f_{Bt}^* \geq f_{Bt+1}^* \geq \ldots \geq f_B^*$. Let $g \in \mathcal{U}_\eta$ satisfy $g_i=f^*_i$ for $i \geq Bt$ and $g_1 = \ldots = g_{Bt}$.  We must have $g_0 > g_1$, otherwise $f^*$ would have a mode at $t$. As $\mathcal{T}_t(g) = \mathcal{T}_t(f^*)$, optimality of $f^*$ and Proposition~\ref{prop:ineq} imply $f^*= g$.

Finally, let $h \in \mathcal{U}_\eta$ satisfy $h_i=f^* _i$ for $i \leq Bt$ and $h_{Bt} = h_{Bt+1} = \ldots = h_{k-1} \geq h_k$, where $k$ and $h_k$ are chosen such that $\sum_{i=0} ^B h_i =1$.  As before, Proposition~\ref{prop:ineq} allows us to deduce that $f^* = h$.  Thus $f^*$ satisfies property (ii)(a) of the theorem.
\end{proof}
We are now in a position to state Markov's inequality for random variables with unimodal distributions on $G$, which may be of some independent interest.
\begin{thm}[Markov's inequality under unimodality]
\label{thm:UniMarkov}
Let $X$ be a random variable with a unimodal distribution on $G = \{0,\tfrac{1}{B},\tfrac{2}{B},\ldots,1\}$, and let $t \in G$.  If $\eta := \mathbb{E}(X) \leq 1/3$, then
\[
\mathbb{P}(X \geq t) \leq \left\{ \begin{array}{ll} \dfrac{2\eta - t + \frac{1}{B}}{t + \frac{1}{B}} & \text{ if } \;\; t \in \left(\eta, \, \min\left(\tfrac{3}{2}\eta + \tfrac{1}{2B},2\eta\right) \right] \\
			
				    \dfrac{\eta}{2t-\frac{1}{B}} & \text{ if } \;\; t \in \left(\min\left(\tfrac{3}{2}\eta + \tfrac{1}{2B}, 2\eta\right) , \, \tfrac{1}{2}\right] \\
                        
                              \dfrac{2\eta(1- t + \frac{1}{B})}{1 + \frac{1}{B}} & \text{ if } \;\; t \in \left(\tfrac{1}{2}, \, 1\right].
                             \end{array} \right.
\]
Let $d$ be defined by
\[
d:= d(\eta, B) = -2\left(\eta - \tfrac{1}{2}\right)(6\eta + 1) + \frac{2-4\eta}{B} + \frac{(4\eta - 1)^2}{B^2}.
\]
If $\eta > 1/3$ and $d > 0$, then
\begin{equation*}
\mathbb{P}(X \geq t) \leq \begin{cases} \dfrac{2\eta - t + \frac{1}{B}}{t + \frac{1}{B}} & \text{ if } \;\; t \in \left( \eta, \, \tfrac{1}{2} + \tfrac{1}{4\eta} (1 + \tfrac{1}{B} - d^{1/2}) \right] \\
                                                              \dfrac{2\eta(1- t + \frac{1}{B})}{1 + \frac{1}{B}} & \text{ if } \;\; t \in \left(\tfrac{1}{2} + \tfrac{1}{4\eta}( 1 + \tfrac{1}{B} - d^{1/2}), \, 1\right].
                             \end{cases}
\end{equation*}
Finally, if $\eta > 1/3$ and $d \leq 0$, then
\[
\mathbb{P}(X \geq t) \leq \dfrac{2\eta - t + \frac{1}{B}}{t + \frac{1}{B}}.
\]
\end{thm}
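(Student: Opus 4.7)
The plan is to use Lemma~\ref{lemma:u_exists} and Proposition~\ref{prop:ubound} to reduce the tail-maximization problem to a small parametric family, and then perform the resulting optimization by case analysis. By those results, any maximizer $f^*$ of $\mathcal{T}_t$ over $\mathcal{U}_\eta$ satisfies $\mathcal{E}(f^*) = \eta$ and is of one of two types: type~(b) has support $\{0, 1/B, \ldots, t\}$ with $f_0^* = \cdots = f_{Bt-1}^* = a$ and $f_{Bt}^* = c \geq a$, while type~(a) has $f_0^* = a > f_1^* = \cdots = f_{m-1}^* = b \geq f_m^* = c$ for some $m = i_M \in \{\lceil Bt \rceil, \ldots, B\}$ (smaller $m$ contribute nothing to the tail). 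For type~(b), solving the two linear constraints (total mass $1$, expectation $\eta$) gives $\mathcal{T}_t^{(b)} = c = (2\eta - t + 1/B)/(t + 1/B)$, which is exactly the first expression in the theorem; the requirement $a \leq c$ forces $t \leq 2\eta$, so type~(b) is feasible only in this range.

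For type~(a), the constraints let us eliminate $a$ and $c$ in terms of $(m, b)$, giving $c = B\eta/m - b(m-1)/2$ and
\[
\mathcal{T}_t^{(a)}(m, b) \;=\; \frac{m - 2Bt + 1}{2}\, b \;+\; \frac{B\eta}{m},
\]
with $b$ subject to $0 \leq c \leq b$ and $b < a$. Because the objective is linear in $b$, the optimum over $b$ is attained at an endpoint. When the coefficient $(m - 2Bt + 1)/2$ is positive, i.e.\ $m \geq 2Bt$, we push $b$ up to $2B\eta/(m(m-1))$, forcing $c = 0$; this gives candidate~(a1) with value $\mathcal{T}_t^{(a1)}(m) = 2B\eta(m - Bt)/(m(m-1))$, which by calculus is maximised over integer $m \in \{\lceil Bt \rceil, \ldots, B\}$ at $m \in \{2Bt - 1, 2Bt\}$ when these lie in range (both values giving $\eta/(2t - 1/B)$). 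When the coefficient is non-positive ($m \leq 2Bt - 1$), we take $b = c = 2B\eta/(m(m+1))$, giving candidate~(a2) with value $\mathcal{T}_t^{(a2)}(m) = 2B\eta(m - Bt + 1)/(m(m+1))$; for $t > 1/2$ the integer optimum is at the boundary $m = B$ and gives $2\eta(1 - t + 1/B)/(1 + 1/B)$.

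The final bound equals the maximum of $\mathcal{T}_t^{(b)}$, $\mathcal{T}_t^{(a1)}$ and $\mathcal{T}_t^{(a2)}$. For $\eta \leq 1/3$, a direct comparison shows that form~(b) dominates for small $t$, (a1) dominates for intermediate $t$, and (a2) with $m = B$ dominates for $t > 1/2$ (the constraint $m \leq B$ makes (a1) infeasible there, since $2Bt - 1 > B$). Equating $\mathcal{T}_t^{(b)} = \mathcal{T}_t^{(a1)}$ gives a quadratic in $t$ whose relevant root is $t = 3\eta/2 + 1/(2B)$; the $\min$ with $2\eta$ in the case boundary accounts for the subregime $\eta < 1/B$, in which form~(b) is feasible only for $t \leq 2\eta < 3\eta/2 + 1/(2B)$. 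For $\eta > 1/3$, form~(a1) is dominated throughout by $\max\{\mathcal{T}_t^{(b)}, \mathcal{T}_t^{(a2)}\}$, so only two candidates remain; the equation $\mathcal{T}_t^{(b)} = \mathcal{T}_t^{(a2)}(B)$ reduces to a quadratic in $t$ whose discriminant simplifies to exactly $d(\eta, B)$, giving the transition at $t = 1/2 + (1 + 1/B - \sqrt{d})/(4\eta)$ when $d > 0$, and showing that form~(b) dominates throughout $(\eta, 1]$ when $d \leq 0$. The main technical obstacle will be the careful algebra required to (i) confirm the discrete optima over $m$ land at the claimed integer values, (ii) justify passing to the closure to handle strict-inequality boundaries such as $a > b$ and $c > 0$, and (iii) simplify the discriminant of the type~(b)-vs-type~(a2) quadratic into the closed form $d(\eta, B)$ stated in the theorem.
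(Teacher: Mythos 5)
Your proposal follows essentially the same route as the paper: it invokes Proposition~\ref{prop:ubound} to reduce to the two extremal shapes, solves the resulting linear programs (your candidates (a1) and (a2) are the paper's one-parameter family $\gamma(s)=2B\eta(s-Bt+1)/\{s(s+1)\}$ under an index shift, with the paper likewise locating the integer optimum at $s\in\{2Bt-2,2Bt-1\}$ or $s=B$), and then compares the same three candidate bounds with the same crossover points, including the correct identification of $d(\eta,B)$ as the discriminant of the $b_1=b_3$ quadratic. The plan and all key computations match the paper's proof.
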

\begin{proof}
Proposition~\ref{prop:ubound} tells us that $\mathbb{P}(X \geq t)$ must be at most the maximum of the optimal solutions to the following two optimisation problems:
\[
\begin{tabular}{llllll}
$(P)$: & Maximise & $b(s-Bt) + c$ in $a,b,c,s$ \quad &$(Q)$: &Maximise & $b$ in $a, b$ \\
& subject to & $a + (s-1)b + c = 1$  & & subject to & $Bta + b = 1$ \\
& & $\frac{s}{2}(s-1)b + sc = B\eta$ & & & $\frac{Bt}{2}(Bt-1)a + Btb = B\eta$ \\
& & $a > b \geq c \geq 0$  & & & $b \geq a \geq  0$. \\
& & $s \in \{Bt, Bt+1, \ldots, B\}$ & & & 
\end{tabular}
\]
Problem $(P)$ corresponds to case (ii)(a) of Proposition~\ref{prop:ubound}, and problem $(Q)$ to case (ii)(b).

The solution to $(Q)$ is determined entirely by the constraints, and we see that the optimal value is
\begin{equation}
 \frac{2\eta - t + \frac{1}{B}}{t + \frac{1}{B}}.
\end{equation}
 
To solve $(P)$, we break it into $B(1-t)+1$ subproblems: for $s \in \{Bt, Bt+1, \ldots, B\}$, we define subproblem $(P(s))$ as follows:
\[
\begin{tabular}{lll}
$(P(s))$: & Maximise & $b(s-Bt) + c$ in $a,b,c$ \\
&subject to & $a + (s-1)b + c = 1$ \\
&& $\frac{s}{2}(s-1)b + sc = B\eta$ \\
&& $b \geq c$, \\
&& $a, b, c \geq 0.$
\end{tabular}
\]
Notice that we have not included the $a > b$ constraint.  This is because Proposition~\ref{prop:ubound} ensures that this constraint is always satisfied at an optimal solution of $(P)$, so there exists $s^*$ such that every optimal solution of $(P(s^*))$ corresponds to an optimal solution of $(P)$. 

Now each subproblem is a standard linear programming problem, so we know that one of the basic feasible solutions must be optimal.  Since $a > 0$, all basic feasible solutions must have either $c=0$ or $b=c$. Thus we may replace the subproblems $(P(s))$ by
\[
\begin{tabular}{lll}
$(P'(s))$: & Maximise & $b(s-Bt +1)$ in $a,b$ \\
&subject to & $a + sb = 1$ \\
&& $\frac{s}{2}(s + 1)b = B\eta$ \\
&& $a, b \geq 0$.
\end{tabular}
\]
The second constraint is enough to determine that the optimal value of $P'(s)$ is
\begin{equation} \label{eq:objfunc}
\frac{2B\eta(s-Bt+1)}{s(s+1)}=: \gamma(s).
\end{equation}
Now we can proceed to find an $s^*$ which maximises $\gamma$ over $\{Bt,Bt+1,\ldots,B\}$. The sign of $\gamma'(s)$ is the sign of
\[
-s^2+2(Bt -1)s + Bt - 1.
\]
This quadratic in $s$ has roots
\[
Bt -1 \pm\sqrt{(Bt - 1)^2+ Bt -1}.
\]
So $\gamma(s)$ is increasing for all $s \in \{Bt,Bt+1,\ldots,B\}$ with
\begin{equation} \label{eq:alpha}
s \leq Bt -1 +\sqrt{\left(Bt - \tfrac{1}{2}\right)^2-\tfrac{1}{4}}=: s_0.
\end{equation}
When $s_0 < B$, we must have $s^* \in \{2Bt - 2,2Bt - 1\}$. In fact, by examining \eqref{eq:objfunc}, we see that $\gamma(2Bt-2) = \gamma(2Bt-1)$. Also, from \eqref{eq:alpha}, we see that when $t > 1/2$, we have that $s_0 \geq B$, so $s^*=B$. So far, we have shown that
\[
\mathbb{P}(X \geq t) \leq \max(b_1, b_2, b_3),
\]
where bounds $b_1, b_2$ and $b_3$ are given by
\begin{align*}
b_1 := b_1 (t, \eta, B) &= \frac{2\eta - t + \frac{1}{B}}{t + \frac{1}{B}} \mathbbm{1}_{\{ \eta < t \leq \min (2\eta,1) \}} \\
b_2 := b_2 (t, \eta, B) &= \frac{\eta}{2t-\frac{1}{B}} \, \mathbbm{1}_{\{\eta < t \leq 1/2\}} \\
b_3 := b_3 (t, \eta, B) &= \dfrac{2\eta(1- t + \frac{1}{B})}{1 + \frac{1}{B}}\, \mathbbm{1}_{\{\max(\eta,1/2) \leq t \leq 1\}}.
\end{align*}

All that remains now is to determine which of $b_1, b_2$ and $b_3$ have the largest value.  We first consider the case when $\eta \leq \frac{1}{3}$.  When $t \leq \min(1/2,2\eta)$, 
\[
 \mathrm{sgn}(b_2 - b_1) = \mathrm{sgn}\left\{ \left( t - \tfrac{3}{2} \eta - \tfrac{1}{2B} \right) \left( t - \tfrac{1}{B}\right) \right\}.
\]
Now for $1/2 < t \leq 2\eta$,
\[
\frac{\partial b_3}{\partial t} = -\frac{2\eta}{1 + \frac{1}{B}} \geq -\frac{(2\eta+\frac{2}{B})}{(t + \frac{1}{B})^2} = \frac{\partial b_1}{\partial t}.
\]
Furthermore,
\[
 b_3 \left( \tfrac{1}{2} + \tfrac{1}{2B}, \eta, B \right) = \eta \geq \frac{2\eta - \frac{1}{2} + \frac{1}{2B}}{\frac{1}{2} + \frac{3}{2B}} = b_1 \left( \tfrac{1}{2} + \tfrac{1}{2B}, \eta, B \right).
\]
Putting this together gives the required bound for $\eta \leq 1/3$.

When $\eta > 1/3$, we can ignore $b_2$ as it is dominated by $b_1$. Comparing $b_1$ and $b_3$, we get the final cases of the bound.
\end{proof}
\begin{prooftitle}{of Theorem~\ref{thm:Unimodal}}
Recalling that $\mathbb{E}\{\tilde{\Pi}_B(k)\} = p_{k,\floor{n/2}}^2$, we follow the proof of Lemma~\ref{lemma:selprob}, but apply Theorem~\ref{thm:UniMarkov} at the last step of \eqref{Eq:Markov} with $t = 2\tau - 1$ to deduce that if the distribution of $\Tilde{\Pi}_B(k)$ is unimodal, then
\[
\mathbb{P}(k \in \hat{S}_{n,\tau}^{\mathrm{CPSS}}) \leq \mathbb{P}\{\tilde{\Pi}_B(k) \geq 2\tau -1\} \leq C(\tau,B)p_{k,\floor{n/2}}^2, 
\]
where $C(\tau,B)$ is given in the statement of Theorem~\ref{thm:Unimodal}.  The bound for $\mathbb{E}|\hat{S}_{n,\tau}^{\mathrm{CPSS}} \cap L_\theta|$ then follows in the same way that Theorem~\ref{thm:hi_low} follows from Lemma~\ref{lemma:selprob}. \hfill $\Box$
\end{prooftitle}

\subsection{Proofs of results on $r$-concavity}

\begin{prooftitle}{of Proposition~\ref{prop:log_iff._r}}
Suppose that $f$ is log-concave, so we may write $f=e^{-\phi}$ where $\phi$ is a convex function. If $r < 0$, then $-r\phi$ is convex, and as the exponential function is increasing and convex, $f^r = e^{-r\phi}$ is convex.  

Conversely, suppose that $f$ is not log-concave, so there exist $x, y$ and $\lambda \in (0,1)$ with
$f(\lambda x + (1-\lambda)y) < f(x)^\lambda f(y)^{1 - \lambda}$.
 Then as $M_r(f(x), f(y); \lambda) \to f(x)^\lambda f(y)^{1 - \lambda}$ as $r \to 0$, we must have $f(\lambda x + (1-\lambda)y) <M_r(f(x), f(y); \lambda)$ for some $r<0$, and so $f$ cannot be $r$-concave. \hfill $\Box$
\end{prooftitle}

\begin{prooftitle}{of Proposition~\ref{prop:uni_implies_r}}
Let $I=\{1, \ldots, l\} \cup \{u, \ldots, B-1\}$. The conditions on $f$ imply that
\[
 f_i > \min \{f_{i-1}, f_{i+1}\}, \quad i \in I.
\]
Then as $M_r (f_{i-1}, f_{i+1}, \tfrac{1}{2} ) \to \min \{f_{i-1}, f_{i+1}\}$ as $r \to -\infty$, for each $i \in I$, may choose an $r_i < 0$ with
\begin{equation} \label{eq:strict_r}
f_i > M_{r_i} (f_{i-1}, f_{i+1} ; \tfrac{1}{2} ).
\end{equation}
Set $r=\min_{i \in I} r_i$. Observe that as $M_r(a, b; \tfrac{1}{2})$ is increasing in $r$ for all fixed $a$ and $b$, the inequalities \eqref{eq:strict_r} are all satisfied when $r_i = r$. Thus $f_i ^r \leq \tfrac{1}{2}(f_{i-1} ^r + f_{i+1} ^r)$ for all $i \in \{1, \ldots, B-1\}$, so $f$ is $r$-concave. \hfill $\Box$
\end{prooftitle}
By analogy with the unimodal case, let $\mathcal{F}_{r,\eta} = \{f \in \mathcal{F}_r:\mathcal{E}(f) \leq \eta\}$.  In maximising $\mathcal{T}_t$ over $\mathcal{F}_{r,\eta}$, there is again no loss of generality in assuming $0 < \eta < t$.
\begin{lemma} \label{lem:r_exists}
For each $r < 0$, there exists a maximiser of $\mathcal{T}_t$ in $\mathcal{F}_{r,\eta}$.
\end{lemma}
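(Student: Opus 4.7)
The plan is to mimic the proof of Lemma~\ref{lemma:u_exists}: identifying $\mathcal{F}_{r,\eta}$ with a subset of $\mathbb{R}^{B+1}$ via $f \leftrightarrow (f_0,\ldots,f_B)$, I would show it is compact and non-empty, after which the continuous linear functional $\mathcal{T}_t$ attains its maximum by the extreme value theorem. Boundedness is immediate since $\mathcal{F}_{r,\eta} \subset [0,1]^{B+1}$, and non-emptiness follows by exhibiting, for instance, the point mass at $0$: its mean is $0 \leq \eta$, and its linear interpolant $\tilde{f}(x) = \max(1-Bx,0)$ is $r$-concave since on each relevant subinterval of $[0,1]$ either both sides of the defining inequality vanish (using the convention $0^r = \infty$) or $\tilde{f}$ is affine, in which case $M_r \leq M_1$ gives the inequality.

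The substantive step is closedness. Decomposing
\[
\mathcal{F}_{r,\eta} = H \cap \mathcal{E}^{-1}([0,\eta]) \cap \mathcal{F}_r,
\]
where $H = \{x \in \mathbb{R}^{B+1} : \sum_i x_i = 1\}$ is the probability hyperplane, the first two factors are closed by continuity of $\mathcal{E}$, so I only need to show that $\mathcal{F}_r$ is closed. Taking $f^{(n)} \in \mathcal{F}_r$ with $f^{(n)} \to f$ coordinatewise, the associated linear interpolants $\tilde{f}^{(n)}$ converge pointwise to $\tilde{f}$, and I would pass to the limit in
\[
\tilde{f}^{(n)}\bigl((1-\lambda)x + \lambda y\bigr) \geq M_r\bigl(\tilde{f}^{(n)}(x), \tilde{f}^{(n)}(y); \lambda\bigr)
\]
to recover the same inequality for $\tilde{f}$, whence $f \in \mathcal{F}_r$.

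The main obstacle is the continuity of $M_r$ on $[0,\infty)^2$ at boundary points where one or both arguments vanish. For $r<0$, this amounts to checking that $a_n \to 0^+$ with $b > 0$ fixed forces $a_n^r \to \infty$ and hence $\{(1-\lambda) a_n^r + \lambda b^r\}^{1/r} \to 0 = M_r(0,b;\lambda)$, with the analogous check when both arguments tend to $0$. Once this short verification is in place, the closed-bounded-non-empty argument runs exactly as in Lemma~\ref{lemma:u_exists}, and existence of a maximiser follows.
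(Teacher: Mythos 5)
Your argument is correct and follows essentially the same route as the paper: both identify $\mathcal{F}_{r,\eta} = H \cap \mathcal{E}^{-1}([0,\eta]) \cap \mathcal{F}_r$ with a compact subset of $\mathbb{R}^{B+1}$ and then invoke continuity of the linear functional $\mathcal{T}_t$. The only (immaterial) difference is in how closedness of $\mathcal{F}_r$ is checked — the paper shows the complement is open via persistence of a strict three-point violation of convexity of $f^r$, while you pass to the limit in the defining $M_r$ inequality using continuity of $M_r$ up to the boundary of $[0,\infty)^2$.
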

\begin{proof}
This proof is almost identical to that of Lemma~\ref{lemma:u_exists}, except here we let $O=\{f \in \mathbb{R}^{B+1} : f^r \text{ is not convex}\}$.  If $f \in O$, then there must exist $i_1 < i_2 < i_3$ such that
\[
(i_3-i_2)f_{i_1}^r + (i_2-i_1)f_{i_3}^r < (i_3-i_1)f_{i_2}^r
\]
and it is clear that the above inequality must hold for all $g$ in a sufficiently small open ball about $f$. Thus $O$ is open, and the rest of the proof is clear.
\end{proof}
\begin{prop} 
\label{prop:rbound}
Any maximiser $f^* \in \mathcal{F}_{r,\eta}$ of $\mathcal{T}_t$ satisfies
\begin{enumerate}[(i)]
 \item $\mathcal{E}(f^*) = \eta$
\item $f^{*r}$ is linear between $f_0^{*r}$ and $f_{i_{M-1}}^{*r}$, where $i_M=B\max(\supp(f^*))$.   
\end{enumerate}
\end{prop}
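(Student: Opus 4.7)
Both assertions are proved by variational arguments modelled on those of Proposition~\ref{prop:ubound}: assuming the claimed property fails, we exhibit $g \in \mathcal{F}_{r,\eta}$ with $\mathcal{T}_t(g) > \mathcal{T}_t(f^*)$, contradicting the existence of the maximiser given by Lemma~\ref{lem:r_exists}. The structural fact used throughout is that, for $r<0$, convexity of $f^r$ on its support forces any $f \in \mathcal{F}_r$ to have contiguous lattice support $\{i_m, i_m+1, \ldots, i_M\}$.

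For (i), suppose $\mathcal{E}(f^*) < \eta$. The cleanest case is $i_M < B$, where the right translate $g_i := f^*_{i-1}$ (with $g_0 := 0$) is $r$-concave, has $\mathcal{E}(g) = \mathcal{E}(f^*) + 1/B$, and satisfies $\mathcal{T}_t(g) \geq \mathcal{T}_t(f^*)$, with strict inequality whenever $Bt \in \{i_M, i_M+1\}$. If a full translate would overshoot $\eta$, a convex combination of $f^*$ and $g$ corrects $\mathcal{E}$. In the remaining edge cases ($i_M = B$, or when translation does not strictly increase $\mathcal{T}_t$), I would take a local perturbation $g = f^* + \epsilon h$ where $h$ is supported on $\{i_m, \ldots, i_M\}$ with $\sum_i h_i = 0$, $\sum_i (i/B) h_i > 0$, and $\sum_{i \geq Bt} h_i > 0$. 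For $\epsilon > 0$ sufficiently small, strict convexity slack of $f^{*r}$ at the interior points where the constraint is not saturated ensures $g^r$ remains convex, yielding the contradiction.

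For (ii), assume by (i) that $\mathcal{E}(f^*) = \eta$, and suppose $f^{*r}$ is not linear on $\{0, 1, \ldots, i_M - 1\}$. Let $\phi$ denote the chord of $f^{*r}$ between $(0, f^{*r}_0)$ and $(i_M - 1, f^{*r}_{i_M-1})$; by convexity of $f^{*r}$ we have $\phi(i) \geq f^{*r}_i$ on $\{0, \ldots, i_M - 1\}$, with strict inequality at some interior point. Moreover, the convexity condition across the step $i_M - 1 \to i_M$ gives $f^{*r}_{i_M} \geq \phi(i_M)$, with the inequality strict, for otherwise $f^{*r}$ would be linear on all of $\{0, \ldots, i_M\}$, contradicting our hypothesis. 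Set $\tilde g_i := \phi(i)^{1/r}$ for $i \leq i_M - 1$ and $\tilde g_{i_M} := f^*_{i_M}$; since $r < 0$, $\tilde g_i \leq f^*_i$ with strict inequality somewhere, so there is a surplus $\delta := 1 - \sum_i \tilde g_i > 0$. I absorb this surplus by raising $\tilde g_{i_M}$ (using the strict slack $f^{*r}_{i_M} > \phi(i_M)$). The resulting $g$ has $g^r$ linear on $\{0, \ldots, i_M - 1\}$, still satisfies convexity at $i_M - 1$, and has $\mathcal{T}_t(g) > \mathcal{T}_t(f^*)$; a final two-point adjustment within $\{i \geq Bt\}$ brings $\mathcal{E}(g)$ back to $\eta$ without reducing $\mathcal{T}_t$, delivering the contradiction.

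The main obstacle is verifying that these modifications genuinely remain in $\mathcal{F}_r$. In (ii), if the surplus $\delta$ exceeds the single-point room $\phi(i_M)^{1/r} - f^*_{i_M}$, one cannot place it all at $i_M$: the reallocation must be spread across several indices near $i_M$ in a way that simultaneously preserves linearity on the initial segment and convexity at the last step, which is delicate. Similarly, the small perturbations used in (i) must be chosen to respect any convexity inequalities that $f^{*r}$ already saturates, otherwise $r$-concavity fails for arbitrarily small $\epsilon$. These technical checks, rather than the conceptual layout, constitute the bulk of the argument.
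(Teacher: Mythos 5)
Your overall strategy (perturb a putative maximiser to contradict optimality) matches the paper's in spirit, but the specific perturbations you propose do not stay inside $\mathcal{F}_r$, and this is not a removable technicality --- it is where the whole difficulty of the proposition lives. The most concrete problem is in (i): you repair the overshoot of the translate by taking a convex combination of $f^*$ and $g$, but $\mathcal{F}_r$ is not a convex set for $r<0$. For instance, with $r=-1$ and $B=2$, the mass functions proportional to $(1,1,1)$ and to $(\tfrac{1}{3},\tfrac{1}{2},1)$ both have convex reciprocals, yet the reciprocal of their average is proportional to $(\tfrac{3}{2},\tfrac{4}{3},1)$, which violates convexity at the middle point since $\tfrac{4}{3}>\tfrac{5}{4}$. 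So ``a convex combination corrects $\mathcal{E}$'' requires a dedicated argument even for the special case of a function and its one-step translate, and none is given. Your fallback perturbation $f^*+\epsilon h$ has the same difficulty: wherever $f^{*r}$ is locally linear the convexity constraints are active, and then no amount of shrinking $\epsilon$ rescues a generic $h$; you flag this yourself but do not resolve it. In (ii) the unresolved step is again the reallocation: you must restore both $\sum_i g_i=1$ and $\mathcal{E}(g)\le\eta$ while keeping $g^r$ convex, and you concede this is ``delicate'' without carrying it out. Worse, if the non-linearity of $f^{*r}$ happens to lie entirely in $\{Bt,\ldots,i_M-1\}$, your chord replacement leaves the head untouched and merely redistributes mass within the tail, so $\mathcal{T}_t$ need not increase at all and no contradiction is reached.

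The paper sidesteps all of this by perturbing in $\phi=f^{*r}$ space, where preservation of convexity is immediate: for (i) it adds $\epsilon_1$ to $\phi$ at the left endpoint of the support and subtracts $\epsilon_2$ at every larger index, which keeps $\phi$ convex while moving mass rightward and strictly increasing the tail; for (ii) it replaces $\phi$ below $Bt$ by a linear interpolant and, separately, above $Bt-1$ by the steepest admissible linear extension, in each case preserving the tail probability exactly and then invoking the equality case of Proposition~\ref{prop:ineq} together with part (i) to conclude that $f^*$ must already coincide with the modified function. Restructuring your argument along those lines --- arguing by rigidity rather than by constructing a strictly better competitor --- removes the need for the mass-space adjustments that your proposal leaves open.
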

\begin{proof}
(i) Suppose that $\mathcal{E}(f^*)<\eta$. Define $i_m:=B\min(\supp(f^*))$. Let $\phi={f^*}^r$ and define a new sequence $\psi:=(\psi_i:i=0,\ldots,B)$ by
\[
 \psi_i= \left\{ \begin{array}{ll}
       \infty & \text{ if } i<i_m \\
      \phi_i + \epsilon_1 & \text{ if } i=i_m \\
    \phi_i - \epsilon_2 & \text{ if } i>i_m \\
      \end{array} \right.
\]
where $\epsilon_1, \, \epsilon_2 >0$ are chosen such that $\sum_{i=0} ^B \psi_i^{1/r} = 1$, but are small enough that $\mathcal{E}(\psi^{1/r}) \leq \eta$. Then $\psi$ is convex, so $\psi^{1/r} \in \mathcal{F}_{r,\eta}$. Since $\eta > 0$, we must have $\mathcal{T}_t(f^*)>0$ so $\max(\supp(f^*)) \geq t$. Also, as we are assuming $\eta < \tau$, we must have $i_m<t$. Therefore $\mathcal{T}_t(\psi^{1/r})>\mathcal{T}_t(f^*)$, which is a contradiction. 

(ii) Set $\phi=f^{*r}$, so $\phi$ is convex and $\phi^{1/r}=f^*$. Define $\psi'=(\psi'_0,\ldots,\psi_B') \in \mathbb{R}^{B+1}$ as follows.  Take $\psi'_i = \phi_i$ for $i \geq Bt$, but make $\psi'$ linear between $\psi_0'$ and $\psi_{Bt}'$ such that $g:={\psi'}^{1/r}$ has $\sum_{i=0}^B g_i= 1$ and $g_0 > 0$. This is possible since $\mathcal{E}(f^*)\leq \eta < t$, so $\min(\supp(f^*))<t$.  Note that $\psi'$ is still convex since we must have $\psi'_{Bt} - \psi'_{Bt-1} \leq \phi_{Bt} - \phi_{Bt-1}$. Also $\mathcal{T}_t(g)=\mathcal{T}_t(f^*)$.  Applying Proposition~\ref{prop:ineq}, we see that $\mathcal{E}(g) \leq \mathcal{E}(f^*)$.  Optimality of $f^*$ means that equality must hold, so $f^* = g$ and also $\phi = \psi'$.

Now if $\phi$ is in fact linear between $\phi_0$ and $\phi_B$, condition (ii) of the theorem is satisfied and we are done.  Otherwise we may assume $\phi$ is not a linear function between $\phi_{Bt-1}$ and $\phi_B$ and we can define $\psi$ such that $\psi_i=\phi_i$ for $i \leq Bt$, that $\psi$ is linear between $\psi_{Bt-1}$ and $\psi_{k-1}$ and $\psi_i = \infty$ for $i > k$.  Here, $k$ is chosen such that $g:=\psi^{1/r}$ has $\sum_{i=0} ^B g_i = 1$, and the convexity of $\phi$ ensures that such a $k \leq B$ exists.  Applying Proposition~\ref{prop:ineq}, we see that $\mathcal{E}(g) \leq \mathcal{E}(f^*)$.  Since $\mathcal{T}_t(g) = \mathcal{T}_t(f^*)$, as before, optimality of $f^*$ allows us to conclude that $f^* = g$.
\end{proof}

\subsection{Computing the $r$-concave tail probability bound}
\label{Sec:Alg}
 
Here we describe a numerical algorithm that computes the function $D$ defined in Section~\ref{sec:r_concave}. Note that this is the maximum of $\mathcal{T}_t(f)$ over $f \in \mathcal{F}_{r,\eta}$.  We shall only discuss the case where $f^*$ is decreasing, as is always the case when $t > 2\eta$. The increasing case is very similar and less important for our application.  We first note that we may parametrise the $r$-concave probability mass functions whose $r^\mathrm{th}$ powers are linear as follows:
\begin{equation} \label{eq:linear_f}
f_{a,k; i} = \frac{(a + i)^{1/r}}{\sum_{j=0} ^k (a + j)^{1/r}}, \quad i=0,1, \ldots, k
\end{equation}
where $k \leq B$.  As $\mathcal{E}(f_{a,k})$ is strictly increasing in $a$, for each $k$, there is a unique $a_k$ for which $\mathcal{E}(f_{a_k,k}) = \eta$. We also note here that $a_k$ decreases with $k$.  This is easily seen by observing that, regardless of the value of $k$, the parameter $a$ in \eqref{eq:linear_f} determines the ratio of $f_{a,k;i}$ to $f_{a,k;j}$, each $i, j$.

According to Proposition~\ref{prop:rbound}, if $f^* \in \mathcal{F}_{r,\eta}$ maximises $\mathcal{T}_t$, then $f^{*r}$ is linear up to its penultimate support point. We can parametrise these in the following way.  Write
\[
\frac{\sum_{i=1} ^k i(a + i)^{1/r} + (k+1)c}{\sum_{j=0} ^k (a + j)^{1/r} + c} = B\eta,
\]
and then solve for $c$:
\[
c = c(a,k) = \frac{B\eta \sum_{j=0} ^k (a + j)^{1/r} - \sum_{i=1} ^k i(a + i)^{1/r}}{k + 1 - B\eta}.
\]
We see that as $a$ ranges through $[a_{k+1}, a_k]$, we obtain all the relevant probability mass functions supported on $0,1,\ldots, k + 1$ via
\begin{align*}
g_{a,k;i} &= \frac{(a + i)^{1/r}}{\sum_{j=0} ^k (a+j)^{1/r} + c(a, k)}, \quad i = 0,1,\ldots, k \\
g_{a,k;k+1} &= \frac{c(a, k)}{\sum_{j=0} ^k (a+j)^{1/r} + c(a, k)}.
\end{align*}
The tail probability of $g_{a,k}$, when the threshold is $t$, is
\begin{equation} \label{eq:opt}
 \mathcal{T}_t (g_{a,k}) = 1 - \frac{(k + 1 - B\eta) \sum_{i=0} ^{Bt-1} (a + i)^{1/r}}{\sum_{i=0} ^k (k + 1 - i)(a + i)^{1/r}}
\end{equation}
and we may maximise this over $a \in [a_{k+1}, a_k]$ to obtain an optimal $a^* _k$ for each $k$. This is easily accomplished using a general purpose optimiser such as \texttt{optimize} in \texttt{R}. To summarise, we have the following simple procedure for computing $\mathcal{T}_t (f^*)$.
\begin{enumerate}
 \item For each $k \in \{t, \ldots, B\}$, determine (numerically), the solution in $a_k$ to $\mathcal{E}(f_{a,k}) = \eta$.
 \item Find $a^* _k := \argmax_{a \in [a_{k+1}, a_k]} \mathcal{T}_t (g_{a,k})$, for each $k$.
 \item Let $k^* (t) := \argmax_k \mathcal{T}_t (g_{a^* _k ,k})$.
\end{enumerate}
Then $\mathcal{T}_t (f^*) = \mathcal{T}_t (g_{a^* _{k^*(t)} , k^*(t)})$.  When we wish to evaluate $\mathcal{T}_t (f^*)$ for a range of values of $t$, the process is simplified by the observation that $k^*(t)$ is increasing in $t$, and thus in Step 2 we need only consider those $k$ which are at least $k^* (t - 1/B)$.


Using the algorithm described above, we have computed
\[
 \min\{D(\theta^2, 2\tau - 1, 50, -1/2), D(\theta, \tau, 100, -1/4)\}
\]
over a grid of $\theta$ and $\tau$ values (cf. Tables~\ref{tab:r_conc1} and~\ref{tab:r_conc2}).  An \texttt{R} implementation of the algorithm is available from both authors' websites. 

\begin{table}
\caption{\label{tab:r_conc1}Table of values of $\min\{D(\theta^2, 2\tau - 1, 50, -1/2), D(\theta, \tau, 100, -1/4)\}$ for $\theta \in \{0.01,0.02,0.03,0.04,0.05\}$.}
\centering
{\scriptsize
\fbox{%
\begin{tabular}{cccccc}
& \multicolumn{5}{c}{$\theta$} \\
\cline{2-6}
$\tau$ & $0.01$ & $0.02$ & $0.03$ & $0.04$ & $0.05$ \\ 
  \hline
$0.30$ & $6.11\times 10^{-4}$ & $2.70\times 10^{-3}$ & $6.51\times 10^{-3}$ & $1.21\times 10^{-2}$ & $1.93\times 10^{-2}$ \\ 
$  0.31$ & $5.57\times 10^{-4}$ & $2.47\times 10^{-3}$ & $5.99\times 10^{-3}$ & $1.12\times 10^{-2}$ & $1.79\times 10^{-2}$ \\ 
$  0.32$ & $5.08\times 10^{-4}$ & $2.26\times 10^{-3}$ & $5.52\times 10^{-3}$ & $1.03\times 10^{-2}$ & $1.66\times 10^{-2}$ \\ 
$  0.33$ & $4.65\times 10^{-4}$ & $2.08\times 10^{-3}$ & $5.10\times 10^{-3}$ & $9.57\times 10^{-3}$ & $1.55\times 10^{-2}$ \\ 
$  0.34$ & $4.27\times 10^{-4}$ & $1.92\times 10^{-3}$ & $4.71\times 10^{-3}$ & $8.88\times 10^{-3}$ & $1.44\times 10^{-2}$ \\ 
$  0.35$ & $3.92\times 10^{-4}$ & $1.77\times 10^{-3}$ & $4.36\times 10^{-3}$ & $8.25\times 10^{-3}$ & $1.34\times 10^{-2}$ \\ 
$  0.36$ & $3.61\times 10^{-4}$ & $1.64\times 10^{-3}$ & $4.05\times 10^{-3}$ & $7.68\times 10^{-3}$ & $1.25\times 10^{-2}$ \\ 
$  0.37$ & $3.33\times 10^{-4}$ & $1.51\times 10^{-3}$ & $3.76\times 10^{-3}$ & $7.15\times 10^{-3}$ & $1.17\times 10^{-2}$ \\ 
$  0.38$ & $3.08\times 10^{-4}$ & $1.40\times 10^{-3}$ & $3.50\times 10^{-3}$ & $6.67\times 10^{-3}$ & $1.09\times 10^{-2}$ \\ 
$  0.39$ & $2.85\times 10^{-4}$ & $1.30\times 10^{-3}$ & $3.26\times 10^{-3}$ & $6.23\times 10^{-3}$ & $1.02\times 10^{-2}$ \\ 
$  0.40$ & $2.64\times 10^{-4}$ & $1.21\times 10^{-3}$ & $3.04\times 10^{-3}$ & $5.82\times 10^{-3}$ & $9.59\times 10^{-3}$ \\ 
$  0.41$ & $2.45\times 10^{-4}$ & $1.13\times 10^{-3}$ & $2.83\times 10^{-3}$ & $5.45\times 10^{-3}$ & $9.00\times 10^{-3}$ \\ 
$  0.42$ & $2.27\times 10^{-4}$ & $1.05\times 10^{-3}$ & $2.65\times 10^{-3}$ & $5.10\times 10^{-3}$ & $8.44\times 10^{-3}$ \\ 
$  0.43$ & $2.12\times 10^{-4}$ & $9.81\times 10^{-4}$ & $2.48\times 10^{-3}$ & $4.78\times 10^{-3}$ & $7.93\times 10^{-3}$ \\ 
$  0.44$ & $1.97\times 10^{-4}$ & $9.16\times 10^{-4}$ & $2.32\times 10^{-3}$ & $4.48\times 10^{-3}$ & $7.45\times 10^{-3}$ \\ 
$  0.45$ & $1.84\times 10^{-4}$ & $8.56\times 10^{-4}$ & $2.17\times 10^{-3}$ & $4.21\times 10^{-3}$ & $7.01\times 10^{-3}$ \\ 
$  0.46$ & $1.71\times 10^{-4}$ & $8.01\times 10^{-4}$ & $2.03\times 10^{-3}$ & $3.95\times 10^{-3}$ & $6.60\times 10^{-3}$ \\ 
$  0.47$ & $1.60\times 10^{-4}$ & $7.50\times 10^{-4}$ & $1.91\times 10^{-3}$ & $3.72\times 10^{-3}$ & $6.21\times 10^{-3}$ \\ 
$  0.48$ & $1.50\times 10^{-4}$ & $7.02\times 10^{-4}$ & $1.79\times 10^{-3}$ & $3.50\times 10^{-3}$ & $5.85\times 10^{-3}$ \\ 
$  0.49$ & $1.40\times 10^{-4}$ & $6.58\times 10^{-4}$ & $1.68\times 10^{-3}$ & $3.29\times 10^{-3}$ & $5.52\times 10^{-3}$ \\ 
$  0.50$ & $1.31\times 10^{-4}$ & $6.18\times 10^{-4}$ & $1.58\times 10^{-3}$ & $3.10\times 10^{-3}$ & $5.20\times 10^{-3}$ \\ 
$  0.51$ & $1.23\times 10^{-4}$ & $5.80\times 10^{-4}$ & $1.49\times 10^{-3}$ & $2.92\times 10^{-3}$ & $4.91\times 10^{-3}$ \\ 
$  0.52$ & $1.15\times 10^{-4}$ & $5.45\times 10^{-4}$ & $1.40\times 10^{-3}$ & $2.75\times 10^{-3}$ & $4.63\times 10^{-3}$ \\ 
$  0.53$ & $1.08\times 10^{-4}$ & $5.12\times 10^{-4}$ & $1.32\times 10^{-3}$ & $2.59\times 10^{-3}$ & $4.37\times 10^{-3}$ \\ 
$  0.54$ & $1.01\times 10^{-4}$ & $4.81\times 10^{-4}$ & $1.24\times 10^{-3}$ & $2.44\times 10^{-3}$ & $4.13\times 10^{-3}$ \\ 
$  0.55$ & $9.51\times 10^{-5}$ & $4.52\times 10^{-4}$ & $1.17\times 10^{-3}$ & $2.30\times 10^{-3}$ & $3.90\times 10^{-3}$ \\ 
$  0.56$ & $8.93\times 10^{-5}$ & $4.26\times 10^{-4}$ & $1.10\times 10^{-3}$ & $2.17\times 10^{-3}$ & $3.68\times 10^{-3}$ \\ 
$  0.57$ & $8.39\times 10^{-5}$ & $4.01\times 10^{-4}$ & $1.04\times 10^{-3}$ & $2.05\times 10^{-3}$ & $3.48\times 10^{-3}$ \\ 
$  0.58$ & $7.89\times 10^{-5}$ & $3.77\times 10^{-4}$ & $9.78\times 10^{-4}$ & $1.94\times 10^{-3}$ & $3.29\times 10^{-3}$ \\ 
$  0.59$ & $7.41\times 10^{-5}$ & $3.55\times 10^{-4}$ & $9.22\times 10^{-4}$ & $1.83\times 10^{-3}$ & $2.99\times 10^{-3}$ \\ 
$  0.60$ & $6.97\times 10^{-5}$ & $3.34\times 10^{-4}$ & $8.69\times 10^{-4}$ & $1.64\times 10^{-3}$ & $2.61\times 10^{-3}$ \\ 
$  0.61$ & $6.56\times 10^{-5}$ & $3.15\times 10^{-4}$ & $7.99\times 10^{-4}$ & $1.45\times 10^{-3}$ & $2.30\times 10^{-3}$ \\ 
$  0.62$ & $6.16\times 10^{-5}$ & $2.96\times 10^{-4}$ & $7.12\times 10^{-4}$ & $1.29\times 10^{-3}$ & $2.05\times 10^{-3}$ \\ 
$  0.63$ & $5.80\times 10^{-5}$ & $2.78\times 10^{-4}$ & $6.38\times 10^{-4}$ & $1.16\times 10^{-3}$ & $1.84\times 10^{-3}$ \\ 
$  0.64$ & $5.45\times 10^{-5}$ & $2.51\times 10^{-4}$ & $5.76\times 10^{-4}$ & $1.04\times 10^{-3}$ & $1.66\times 10^{-3}$ \\ 
$  0.65$ & $5.13\times 10^{-5}$ & $2.27\times 10^{-4}$ & $5.22\times 10^{-4}$ & $9.46\times 10^{-4}$ & $1.51\times 10^{-3}$ \\ 
$  0.66$ & $4.82\times 10^{-5}$ & $2.07\times 10^{-4}$ & $4.75\times 10^{-4}$ & $8.61\times 10^{-4}$ & $1.37\times 10^{-3}$ \\ 
$  0.67$ & $4.53\times 10^{-5}$ & $1.89\times 10^{-4}$ & $4.33\times 10^{-4}$ & $7.86\times 10^{-4}$ & $1.25\times 10^{-3}$ \\ 
$  0.68$ & $4.23\times 10^{-5}$ & $1.73\times 10^{-4}$ & $3.97\times 10^{-4}$ & $7.20\times 10^{-4}$ & $1.15\times 10^{-3}$ \\ 
$  0.69$ & $3.88\times 10^{-5}$ & $1.58\times 10^{-4}$ & $3.64\times 10^{-4}$ & $6.60\times 10^{-4}$ & $1.05\times 10^{-3}$ \\ 
$  0.70$ & $3.56\times 10^{-5}$ & $1.45\times 10^{-4}$ & $3.35\times 10^{-4}$ & $6.07\times 10^{-4}$ & $9.68\times 10^{-4}$ \\ 
$  0.71$ & $3.28\times 10^{-5}$ & $1.34\times 10^{-4}$ & $3.08\times 10^{-4}$ & $5.59\times 10^{-4}$ & $8.91\times 10^{-4}$ \\ 
$  0.72$ & $3.02\times 10^{-5}$ & $1.23\times 10^{-4}$ & $2.84\times 10^{-4}$ & $5.15\times 10^{-4}$ & $8.21\times 10^{-4}$ \\ 
$  0.73$ & $2.79\times 10^{-5}$ & $1.14\times 10^{-4}$ & $2.62\times 10^{-4}$ & $4.76\times 10^{-4}$ & $7.58\times 10^{-4}$ \\ 
$  0.74$ & $2.57\times 10^{-5}$ & $1.05\times 10^{-4}$ & $2.42\times 10^{-4}$ & $4.39\times 10^{-4}$ & $7.00\times 10^{-4}$ \\ 
$  0.75$ & $2.37\times 10^{-5}$ & $9.70\times 10^{-5}$ & $2.23\times 10^{-4}$ & $4.06\times 10^{-4}$ & $6.47\times 10^{-4}$ \\ 
$  0.76$ & $2.19\times 10^{-5}$ & $8.95\times 10^{-5}$ & $2.06\times 10^{-4}$ & $3.75\times 10^{-4}$ & $5.97\times 10^{-4}$ \\ 
$  0.77$ & $2.02\times 10^{-5}$ & $8.27\times 10^{-5}$ & $1.90\times 10^{-4}$ & $3.46\times 10^{-4}$ & $5.52\times 10^{-4}$ \\ 
$  0.78$ & $1.87\times 10^{-5}$ & $7.63\times 10^{-5}$ & $1.76\times 10^{-4}$ & $3.20\times 10^{-4}$ & $5.10\times 10^{-4}$ \\ 
$  0.79$ & $1.72\times 10^{-5}$ & $7.04\times 10^{-5}$ & $1.62\times 10^{-4}$ & $2.95\times 10^{-4}$ & $4.70\times 10^{-4}$ \\ 
$  0.80$ & $1.59\times 10^{-5}$ & $6.48\times 10^{-5}$ & $1.50\times 10^{-4}$ & $2.72\times 10^{-4}$ & $4.34\times 10^{-4}$ \\ 
$  0.81$ & $1.46\times 10^{-5}$ & $5.97\times 10^{-5}$ & $1.38\times 10^{-4}$ & $2.51\times 10^{-4}$ & $3.99\times 10^{-4}$ \\ 
$  0.82$ & $1.34\times 10^{-5}$ & $5.48\times 10^{-5}$ & $1.27\times 10^{-4}$ & $2.30\times 10^{-4}$ & $3.67\times 10^{-4}$ \\ 
$  0.83$ & $1.23\times 10^{-5}$ & $5.03\times 10^{-5}$ & $1.16\times 10^{-4}$ & $2.12\times 10^{-4}$ & $3.37\times 10^{-4}$ \\ 
$  0.84$ & $1.13\times 10^{-5}$ & $4.60\times 10^{-5}$ & $1.06\times 10^{-4}$ & $1.94\times 10^{-4}$ & $3.09\times 10^{-4}$ \\ 
$  0.85$ & $1.03\times 10^{-5}$ & $4.20\times 10^{-5}$ & $9.71\times 10^{-5}$ & $1.77\times 10^{-4}$ & $2.82\times 10^{-4}$ \\ 
$  0.86$ & $9.35\times 10^{-6}$ & $3.82\times 10^{-5}$ & $8.84\times 10^{-5}$ & $1.61\times 10^{-4}$ & $2.57\times 10^{-4}$ \\ 
$  0.87$ & $8.47\times 10^{-6}$ & $3.46\times 10^{-5}$ & $8.02\times 10^{-5}$ & $1.46\times 10^{-4}$ & $2.33\times 10^{-4}$ \\ 
$  0.88$ & $7.64\times 10^{-6}$ & $3.12\times 10^{-5}$ & $7.24\times 10^{-5}$ & $1.32\times 10^{-4}$ & $2.11\times 10^{-4}$ \\ 
$  0.89$ & $6.85\times 10^{-6}$ & $2.80\times 10^{-5}$ & $6.50\times 10^{-5}$ & $1.19\times 10^{-4}$ & $1.89\times 10^{-4}$ \\ 
$  0.90$ & $6.10\times 10^{-6}$ & $2.49\times 10^{-5}$ & $5.80\times 10^{-5}$ & $1.06\times 10^{-4}$ & $1.69\times 10^{-4}$ \\ 
   \hline
\end{tabular}}
}
\end{table}

\begin{table}
\caption{\label{tab:r_conc2}Table of values of $\min\{D(\theta^2, 2\tau - 1, 50, -1/2), D(\theta, \tau, 100, -1/4)\}$ for $\theta \in \{0.06,0.07,0.08,0.09,0.1\}$.}
\centering
{\scriptsize
\fbox{%
\begin{tabular}{cccccc}
& \multicolumn{5}{c}{$\theta$} \\
\cline{2-6}
$\tau$ & $0.06$ & $0.07$ & $0.08$ & $0.09$ & $0.10$ \\ 
  \hline
$0.30$ & $2.81\times 10^{-2}$ & $3.82\times 10^{-2}$ & $4.97\times 10^{-2}$ & $6.24\times 10^{-2}$ & $7.63\times 10^{-2}$ \\ 
$  0.31$ & $2.61\times 10^{-2}$ & $3.57\times 10^{-2}$ & $4.64\times 10^{-2}$ & $5.84\times 10^{-2}$ & $7.14\times 10^{-2}$ \\ 
$  0.32$ & $2.43\times 10^{-2}$ & $3.33\times 10^{-2}$ & $4.35\times 10^{-2}$ & $5.47\times 10^{-2}$ & $6.70\times 10^{-2}$ \\ 
$  0.33$ & $2.27\times 10^{-2}$ & $3.12\times 10^{-2}$ & $4.08\times 10^{-2}$ & $5.14\times 10^{-2}$ & $6.30\times 10^{-2}$ \\ 
$  0.34$ & $2.12\times 10^{-2}$ & $2.92\times 10^{-2}$ & $3.83\times 10^{-2}$ & $4.83\times 10^{-2}$ & $5.93\times 10^{-2}$ \\ 
$  0.35$ & $1.98\times 10^{-2}$ & $2.73\times 10^{-2}$ & $3.59\times 10^{-2}$ & $4.55\times 10^{-2}$ & $5.59\times 10^{-2}$ \\ 
$  0.36$ & $1.85\times 10^{-2}$ & $2.57\times 10^{-2}$ & $3.38\times 10^{-2}$ & $4.29\times 10^{-2}$ & $5.28\times 10^{-2}$ \\ 
$  0.37$ & $1.74\times 10^{-2}$ & $2.41\times 10^{-2}$ & $3.18\times 10^{-2}$ & $4.04\times 10^{-2}$ & $4.99\times 10^{-2}$ \\ 
$  0.38$ & $1.63\times 10^{-2}$ & $2.26\times 10^{-2}$ & $2.99\times 10^{-2}$ & $3.81\times 10^{-2}$ & $4.72\times 10^{-2}$ \\ 
$  0.39$ & $1.53\times 10^{-2}$ & $2.13\times 10^{-2}$ & $2.82\times 10^{-2}$ & $3.60\times 10^{-2}$ & $4.46\times 10^{-2}$ \\ 
$  0.40$ & $1.43\times 10^{-2}$ & $2.00\times 10^{-2}$ & $2.66\times 10^{-2}$ & $3.40\times 10^{-2}$ & $4.22\times 10^{-2}$ \\ 
$  0.41$ & $1.35\times 10^{-2}$ & $1.89\times 10^{-2}$ & $2.51\times 10^{-2}$ & $3.22\times 10^{-2}$ & $4.00\times 10^{-2}$ \\ 
$  0.42$ & $1.27\times 10^{-2}$ & $1.78\times 10^{-2}$ & $2.37\times 10^{-2}$ & $3.04\times 10^{-2}$ & $3.79\times 10^{-2}$ \\ 
$  0.43$ & $1.19\times 10^{-2}$ & $1.68\times 10^{-2}$ & $2.24\times 10^{-2}$ & $2.88\times 10^{-2}$ & $3.59\times 10^{-2}$ \\ 
$  0.44$ & $1.12\times 10^{-2}$ & $1.58\times 10^{-2}$ & $2.11\times 10^{-2}$ & $2.72\times 10^{-2}$ & $3.40\times 10^{-2}$ \\ 
$  0.45$ & $1.06\times 10^{-2}$ & $1.49\times 10^{-2}$ & $2.00\times 10^{-2}$ & $2.58\times 10^{-2}$ & $3.23\times 10^{-2}$ \\ 
$  0.46$ & $9.98\times 10^{-3}$ & $1.41\times 10^{-2}$ & $1.89\times 10^{-2}$ & $2.44\times 10^{-2}$ & $3.06\times 10^{-2}$ \\ 
$  0.47$ & $9.41\times 10^{-3}$ & $1.33\times 10^{-2}$ & $1.79\times 10^{-2}$ & $2.31\times 10^{-2}$ & $2.90\times 10^{-2}$ \\ 
$  0.48$ & $8.88\times 10^{-3}$ & $1.26\times 10^{-2}$ & $1.69\times 10^{-2}$ & $2.19\times 10^{-2}$ & $2.76\times 10^{-2}$ \\ 
$  0.49$ & $8.38\times 10^{-3}$ & $1.19\times 10^{-2}$ & $1.60\times 10^{-2}$ & $2.08\times 10^{-2}$ & $2.62\times 10^{-2}$ \\ 
$  0.50$ & $7.92\times 10^{-3}$ & $1.12\times 10^{-2}$ & $1.52\times 10^{-2}$ & $1.97\times 10^{-2}$ & $2.48\times 10^{-2}$ \\ 
$  0.51$ & $7.48\times 10^{-3}$ & $1.06\times 10^{-2}$ & $1.44\times 10^{-2}$ & $1.87\times 10^{-2}$ & $2.36\times 10^{-2}$ \\ 
$  0.52$ & $7.07\times 10^{-3}$ & $1.01\times 10^{-2}$ & $1.36\times 10^{-2}$ & $1.77\times 10^{-2}$ & $2.24\times 10^{-2}$ \\ 
$  0.53$ & $6.68\times 10^{-3}$ & $9.53\times 10^{-3}$ & $1.29\times 10^{-2}$ & $1.68\times 10^{-2}$ & $2.13\times 10^{-2}$ \\ 
$  0.54$ & $6.32\times 10^{-3}$ & $9.02\times 10^{-3}$ & $1.22\times 10^{-2}$ & $1.60\times 10^{-2}$ & $2.02\times 10^{-2}$ \\ 
$  0.55$ & $5.98\times 10^{-3}$ & $8.54\times 10^{-3}$ & $1.16\times 10^{-2}$ & $1.52\times 10^{-2}$ & $1.92\times 10^{-2}$ \\ 
$  0.56$ & $5.65\times 10^{-3}$ & $8.09\times 10^{-3}$ & $1.10\times 10^{-2}$ & $1.44\times 10^{-2}$ & $1.83\times 10^{-2}$ \\ 
$  0.57$ & $5.35\times 10^{-3}$ & $7.66\times 10^{-3}$ & $1.04\times 10^{-2}$ & $1.37\times 10^{-2}$ & $1.73\times 10^{-2}$ \\ 
$  0.58$ & $5.06\times 10^{-3}$ & $7.13\times 10^{-3}$ & $9.49\times 10^{-3}$ & $1.22\times 10^{-2}$ & $1.54\times 10^{-2}$ \\ 
$  0.59$ & $4.39\times 10^{-3}$ & $6.09\times 10^{-3}$ & $8.10\times 10^{-3}$ & $1.04\times 10^{-2}$ & $1.31\times 10^{-2}$ \\ 
$  0.60$ & $3.82\times 10^{-3}$ & $5.30\times 10^{-3}$ & $7.04\times 10^{-3}$ & $9.08\times 10^{-3}$ & $1.14\times 10^{-2}$ \\ 
$  0.61$ & $3.37\times 10^{-3}$ & $4.67\times 10^{-3}$ & $6.21\times 10^{-3}$ & $8.00\times 10^{-3}$ & $1.01\times 10^{-2}$ \\ 
$  0.62$ & $3.01\times 10^{-3}$ & $4.17\times 10^{-3}$ & $5.54\times 10^{-3}$ & $7.14\times 10^{-3}$ & $8.97\times 10^{-3}$ \\ 
$  0.63$ & $2.70\times 10^{-3}$ & $3.74\times 10^{-3}$ & $4.98\times 10^{-3}$ & $6.42\times 10^{-3}$ & $8.06\times 10^{-3}$ \\ 
$  0.64$ & $2.44\times 10^{-3}$ & $3.38\times 10^{-3}$ & $4.50\times 10^{-3}$ & $5.80\times 10^{-3}$ & $7.29\times 10^{-3}$ \\ 
$  0.65$ & $2.21\times 10^{-3}$ & $3.07\times 10^{-3}$ & $4.08\times 10^{-3}$ & $5.26\times 10^{-3}$ & $6.62\times 10^{-3}$ \\ 
$  0.66$ & $2.01\times 10^{-3}$ & $2.79\times 10^{-3}$ & $3.72\times 10^{-3}$ & $4.79\times 10^{-3}$ & $6.03\times 10^{-3}$ \\ 
$  0.67$ & $1.84\times 10^{-3}$ & $2.55\times 10^{-3}$ & $3.40\times 10^{-3}$ & $4.38\times 10^{-3}$ & $5.51\times 10^{-3}$ \\ 
$  0.68$ & $1.68\times 10^{-3}$ & $2.34\times 10^{-3}$ & $3.11\times 10^{-3}$ & $4.01\times 10^{-3}$ & $5.05\times 10^{-3}$ \\ 
$  0.69$ & $1.55\times 10^{-3}$ & $2.14\times 10^{-3}$ & $2.86\times 10^{-3}$ & $3.68\times 10^{-3}$ & $4.64\times 10^{-3}$ \\ 
$  0.70$ & $1.42\times 10^{-3}$ & $1.97\times 10^{-3}$ & $2.63\times 10^{-3}$ & $3.39\times 10^{-3}$ & $4.27\times 10^{-3}$ \\ 
$  0.71$ & $1.31\times 10^{-3}$ & $1.82\times 10^{-3}$ & $2.42\times 10^{-3}$ & $3.12\times 10^{-3}$ & $3.93\times 10^{-3}$ \\ 
$  0.72$ & $1.21\times 10^{-3}$ & $1.68\times 10^{-3}$ & $2.23\times 10^{-3}$ & $2.88\times 10^{-3}$ & $3.63\times 10^{-3}$ \\ 
$  0.73$ & $1.11\times 10^{-3}$ & $1.55\times 10^{-3}$ & $2.06\times 10^{-3}$ & $2.66\times 10^{-3}$ & $3.35\times 10^{-3}$ \\ 
$  0.74$ & $1.03\times 10^{-3}$ & $1.43\times 10^{-3}$ & $1.90\times 10^{-3}$ & $2.46\times 10^{-3}$ & $3.09\times 10^{-3}$ \\ 
$  0.75$ & $9.51\times 10^{-4}$ & $1.32\times 10^{-3}$ & $1.76\times 10^{-3}$ & $2.27\times 10^{-3}$ & $2.86\times 10^{-3}$ \\ 
$  0.76$ & $8.78\times 10^{-4}$ & $1.22\times 10^{-3}$ & $1.63\times 10^{-3}$ & $2.10\times 10^{-3}$ & $2.64\times 10^{-3}$ \\ 
$  0.77$ & $8.12\times 10^{-4}$ & $1.13\times 10^{-3}$ & $1.50\times 10^{-3}$ & $1.94\times 10^{-3}$ & $2.44\times 10^{-3}$ \\ 
$  0.78$ & $7.50\times 10^{-4}$ & $1.04\times 10^{-3}$ & $1.39\times 10^{-3}$ & $1.79\times 10^{-3}$ & $2.26\times 10^{-3}$ \\ 
$  0.79$ & $6.92\times 10^{-4}$ & $9.61\times 10^{-4}$ & $1.28\times 10^{-3}$ & $1.65\times 10^{-3}$ & $2.08\times 10^{-3}$ \\ 
$  0.80$ & $6.38\times 10^{-4}$ & $8.86\times 10^{-4}$ & $1.18\times 10^{-3}$ & $1.53\times 10^{-3}$ & $1.92\times 10^{-3}$ \\ 
$  0.81$ & $5.88\times 10^{-4}$ & $8.16\times 10^{-4}$ & $1.09\times 10^{-3}$ & $1.41\times 10^{-3}$ & $1.77\times 10^{-3}$ \\ 
$  0.82$ & $5.41\times 10^{-4}$ & $7.51\times 10^{-4}$ & $1.00\times 10^{-3}$ & $1.29\times 10^{-3}$ & $1.63\times 10^{-3}$ \\ 
$  0.83$ & $4.97\times 10^{-4}$ & $6.89\times 10^{-4}$ & $9.20\times 10^{-4}$ & $1.19\times 10^{-3}$ & $1.50\times 10^{-3}$ \\ 
$  0.84$ & $4.55\times 10^{-4}$ & $6.32\times 10^{-4}$ & $8.43\times 10^{-4}$ & $1.09\times 10^{-3}$ & $1.37\times 10^{-3}$ \\ 
$  0.85$ & $4.16\times 10^{-4}$ & $5.77\times 10^{-4}$ & $7.71\times 10^{-4}$ & $9.95\times 10^{-4}$ & $1.25\times 10^{-3}$ \\ 
$  0.86$ & $3.79\times 10^{-4}$ & $5.26\times 10^{-4}$ & $7.02\times 10^{-4}$ & $9.07\times 10^{-4}$ & $1.14\times 10^{-3}$ \\ 
$  0.87$ & $3.44\times 10^{-4}$ & $4.77\times 10^{-4}$ & $6.37\times 10^{-4}$ & $8.23\times 10^{-4}$ & $1.04\times 10^{-3}$ \\ 
$  0.88$ & $3.11\times 10^{-4}$ & $4.31\times 10^{-4}$ & $5.76\times 10^{-4}$ & $7.44\times 10^{-4}$ & $9.37\times 10^{-4}$ \\ 
$  0.89$ & $2.79\times 10^{-4}$ & $3.88\times 10^{-4}$ & $5.18\times 10^{-4}$ & $6.69\times 10^{-4}$ & $8.42\times 10^{-4}$ \\ 
$  0.90$ & $2.49\times 10^{-4}$ & $3.46\times 10^{-4}$ & $4.63\times 10^{-4}$ & $5.97\times 10^{-4}$ & $7.53\times 10^{-4}$ \\
   \hline
\end{tabular}}
}
\end{table}


\end{document}